\def\FF{{\mathbb F}}
\def\squareforqed{\hbox{\rlap{$\sqcap$}$\sqcup$}}
\def\qed{\ifmmode\squareforqed\else{\unskip\nobreak\hfil
\penalty50\hskip1em\null\nobreak\hfil\squareforqed
\parfillskip=0pt\finalhyphendemerits=0\endgraf}\fi\medskip}
\newcommand{\diag}{\mathrm{diag}}
\newcommand{\SL}{\mathrm{SL}}
\newcommand{\SO}{\mathrm{SO}}
\newcommand{\ombar}{\bar{\omega}}
\newcommand{\POmega}{\mathrm{P\Omega}}
\newcommand{\Spin}{\mathrm{Spin}}
\newcommand{\Tr}{\mathrm{Tr}}
\newtheorem{theorem}{Theorem}
\newtheorem{lemma}{Lemma}
\newtheorem{corollary}{Corollary}
\newtheorem{definition}{Definition}
\title
[Albert algebras and construction of $F_4$ and $E_6$]
{Albert algebras and construction of the finite simple groups $F_4(q)$, $E_6(q)$ 
and ${}^2E_6(q)$ and their generic covers}
\author{Robert A. Wilson}
\address{School of Mathematical Sciences, Queen Mary University of London,
Mile End Road, London E1 4NS, UK}
\email{R.A.Wilson@qmul.ac.uk}
\date{First draft 05/08/10; this version 18/10/13}
\begin{document}
\maketitle

\begin{abstract}
We give a uniform construction of the finite simple groups $E_6(q)$, $F_4(q)$ and
${}^2E_6(q)$, which does not require any special treatment for characteristics
$2$ or $3$, and in particular avoids any mention of quadratic Jordan algebras.
Although almost all the ingredients can already be found scattered through
research papers spanning more than a century, a coherent, sef-contained,
account is hard to find in the literature.
%
%We adapt the viewpoint adopted by Dray and Manogue in a recent paper on
%the group action of the (real and complex) Lie groups of type $F_4$
%and $E_6$, to the analogous situation over finite fields. This yields
%simple descriptions of generators for the groups, and permits
%relatively straightforward computation of the group orders.
\end{abstract}
\tableofcontents
\section{Introduction}
The construction of the finite simple groups $E_6(q)$ and their triple covers
(which exist whenever $q\equiv 1\bmod 3$) goes back over 100 years
to the work of Dickson \cite{Dickson1,Dickson2}. This work has been, perhaps
unjustly, somewhat neglected following Chevalley's uniform construction
 in 1955 of what are now called
Chevalley groups \cite{Chevalley}, which include five of the ten families of exceptional groups
of Lie type, in particular $E_6(q)$. This is in spite of the fact that \cite{Chevalley}
constructs only the simple groups, and not their generic covers.
Moreover, the representation is on the Lie algebra, which has dimension $78$,
as opposed to the smallest representation, which has dimension $27$.

The other major breakthrough since Dickson is the 
discovery of the exceptional Jordan algebra (or Albert algebra),
 which (in the real case)
was discovered by physicists in the 1930s
as a by-product of an unsuccessful attempt to find an algebraic underpinning for quantum mechanics
\cite{JNW}.  This $27$-dimensional 
algebra consists of $3\times 3$ Hermitian matrices over
Cayley numbers, with multiplication $X\circ Y=\frac12(XY+YX)$. 
Freudenthal \cite{Freud} showed that $E_6$ is the stabiliser of the
%interpretation of Dickson's cubic form
%as a 
`determinant',
a certain cubic form defined on this space.
Seligman showed that the automorphism group of a split Jordan algebra over
any field $F$ is isomorphic to the Chevalley group $F_4(F)$.
Jacobson \cite{Jac1,Jac2,Jac3} studied this construction of $F_4$ in detail
and generalized the construction of $E_6$ to
arbitrary fields of characteristic not $2$ or $3$. By this stage it must have been 
implicit that the determinant is essentially the same as Dickson's
cubic form, although Jacobson does not refer
to Dickson, and I have not found an explicit identification in the literature
earlier than \cite{Magaard}.
%,
%other than Exercise 4.33 in \cite{FSG}.
Moreover, fields of characteristic $2$ and $3$ are still problematic in the
Jordan algebra context, although they were no obstacle to Dickson.
 
Chevalley and Schafer \cite{CheSch} showed that the algebra of derivations
of the real Albert algebra is a Lie algebra of type $F_4$, and also showed how to
extend this to $E_6$ by adjoining right-multiplications by matrices with trace $0$.
% The automorphism group of the algebra is the corresponding Lie group
%of type $F_4$. Analogous constructions over finite fields $\mathbb F_q$
%give the groups
%$F_4(q)$, which lie inside $E_6(q)$.
%However, this remains to this day more of a curiosity than a mainstream approach
%to the groups $E_6(q)$.
Corresponding descriptions of the groups of automorphisms, generated by
maps $X\mapsto \overline{M}^\top XM$ for certain $3\times 3$ matrices $M$ over
complex subfields of the Cayley numbers, are given by Jacobson \cite{Jac3},
who attributes them to Freudenthal, in the revised Russian translation of \cite{Freud}.
See also \cite{DM}, and the 1985 reprint of \cite{Freud}.

%One notable exception to this general neglect is 
It was only in the late 1980s, when the maximal subgroup problem came to
prominence, that there was renewed interest in Dickson's work.
Of particular note are Magaard's unpublished thesis \cite{Magaard} on maximal
subgroups of $F_4(q)$ in characteristic at least $5$, and 
the series of papers by
Aschbacher \cite{Asch1,Asch2,Asch3,Asch4,Asch5} on maximal subgroups
of $E_6(q)$. In these papers, the $27$-dimensional representation of the generic cover
reveals much more structure than the $78$-dimensional representation
on the Lie algebra, and leads to strong restrictions on the shape of a
maximal subgroup. However, the fact that Aschbacher apparently decided not
to attempt to get a complete list of maximal subgroups means that there is still
a need for a modern version of Dickson's construction, to provide a starting point
for investigation of this and other problems.
%What has been missing, until recently, is a convincing octonionic description
%of the \emph{groups}, as opposed to the algebras. This has now been provided by
%More recently,
%Dray and Manogue \cite{DM} give an explicit description of many elements
%of the group $E_6$, as certain $3\times 3$ octonion matrices,
%in the case where the underlying field is the field of real numbers.
%A similar description is already given by Jacobson \cite{Jac3}, citing
%Freudenthal in the revised Russian translation of \cite{Freud}.
It is our aim in this paper to develop this theory in a characteristic-free way,
and in particular to remove the restriction to characteristic not $2$ or $3$. 
The main achievement is a relatively straightforward derivation
of the group order, which is a notoriously difficult problem from the
Lie-theoretic point of view. 
%We do not exclude the characteristics $2$ or $3$,
%although our proofs (or even definitions) are often different in 
%these cases.

\section{The real exceptional Jordan algebra}
First we recall the definition and basic properties
of the real Albert algebra (or exceptional Jordan algebra), 
$\mathbb J=\mathbb J_{\mathbb R}$.
%over the field $F$, whic h we shall initially take to be the real numbers $\mathbb R$,
%before later generalisation. 
It consists of $3\times 3$ Hermitian matrices over %an appropriate 
%version of 
the Cayley numbers (also known as octonions). %(over $\mathbb R$
%we may take either the split
%or the compact form).
We write
\begin{eqnarray}
(a,b,c\mid A,B,C) &=& \begin{pmatrix}a&C&\overline{B}\cr \overline{C}&b&A\cr
B&\overline{A}&c\end{pmatrix}.
\end{eqnarray}
The Jordan product $X\circ Y$ of two such matrices is $\frac12(XY+YX)$, in terms
of the ordinary matrix product $XY$. It can be readily checked that the algebra
is closed under this multiplication. Moreover, $X\circ X=XX$, with the ordinary
matrix product, so we shall write $X^2=X\circ X$. Also, by commutativity we have
$$(X\circ X)\circ X=X\circ(X\circ X),$$ so we write $X^3=X\circ X\circ X$
(but note that we cannot write this as $XXX$, since it is not necessarily
the case that $X(XX)=(XX)X$). 

Now by explicit computation we can verify that any matrix $X=(a,b,c\mid A,B,C)$ in the exceptional
Jordan algebra satisfies a form of the Cayley--Hamilton Theorem, specifically
\begin{eqnarray}
X^3%\circ X \circ X 
&=& \Tr(X).X^2 %X\circ X 
+ Q(X) %\frac12(\Tr(X^2%X\circ X
%)-\Tr(X)^2)
.X + \det(X).I
\end{eqnarray}
where
% or equivalently the group of $F$-linear maps which preserve
the %trace and the 
determinant $\det$ and the quadratic form $Q$ are defined by
\begin{eqnarray}
 Q(X) &=& \frac12(\Tr(X^2%X\circ X
)-\Tr(X)^2)\cr
&=&A\overline{A}+B\overline{B}+C\overline{C}-ab-ac-bc\cr
\det%(a,b,c\mid A,B,C)
(X) &=& abc-aA\overline{A}-bB\overline{B}-cC\overline{C}
+ (AB)C+\overline{C}(\overline{B}.\overline{A}).%2\Re((AB)C).
\end{eqnarray}
It follows that any automorphism of the algebra preserves the trace
$\Tr(X)$, the standard norm $N(X)=\Tr(X^2)$, and the determinant.
Moreover, taking traces in the Cayley--Hamilton Theorem and re-arranging
gives
\begin{eqnarray}
\det(X) &=& \frac13\Tr(X^3%X\circ X\circ X
) -\frac12\Tr(X^2%X\circ X
)\Tr(X) + \frac16\Tr(X)^3.
\end{eqnarray}

Conversely, if the trace, the norm and the determinant are all preserved, then 
the multiplication can be recovered as follows.
Polarizing the norm by
\begin{eqnarray}
2b(X,Y) &=& N(X+Y)-N(X)-N(Y)\cr
&=& 2\Tr(X\circ Y)
\end{eqnarray}
gives an inner product $b$.
Similarly, 
we may polarize the cubic form $\Tr(X^3)$ to obtain a symmetric trilinear form $t$
given by
\begin{eqnarray}
24t(X,Y,Z)&=& \Tr((X+Y+Z)^3)+\Tr((X-Y-Z)^3)\cr
&&\qquad {} +\Tr((Y-X-Z)^3)+\Tr((Z-X-Y)^3)
\end{eqnarray}
Now by explicit computation it can be checked that
\begin{eqnarray}
\Tr((X\circ Y)\circ Z) &=& \Tr (X\circ (Y\circ Z))
\end{eqnarray}
and it then follows that
\begin{eqnarray}
t(X,Y,Z)&=&\Tr((X\circ Y)\circ Z) = b(X\circ Y,Z).
\end{eqnarray}
%Indeed, we have $t(X,Y,Z)=\Tr((X\circ Y)\circ Z)$.
Since the norm $N$ is positive-definite, $b$ is non-singular, 
and therefore knowledge of the inner products
$b(X\circ Y,Z)$ as $Z$ runs over a basis determines $X\circ Y$ uniquely.

 The (compact real form of the) Lie group $F_4$ %(or $F_4(q)$ in the finite case)
may be defined as the automorphism group
of the algebra $\mathbb J$, although of course this was not the original definition.
Similarly, %the (simply-connected) %compact real form of the) 
a particular
group of type $E_6$ %, which I shall denote
%$SE_6$, 
is the group of linear maps which preserve the determinant.
(This real form of $E_6$ is neither split nor compact.)
We may alternatively define $F_4$ as the stabilizer of the identity matrix
in $E_6$, since if the determinant is preserved, and the identity matrix is fixed, then
%using the trilinear form $T$ obtained by polarizing the determinant,
the trace of $X$ is $t(I,I,X)$,
and the norm of $X$ is $t(I,X,X)$, so these are also preserved.
%(Note that the two different real forms of the octonions give
%two non-isomorphic groups $F_4(\mathbb R)$.)

%* (Two paragraphs need re-drafting) 
%In \cite{DM} Dray and Manogue show that for certain matrices 
If $M$ is any $3\times 3$ matrix written over
(any) complex subalgebra of the (real) octonions, 
then the operation $X\mapsto \overline{M}^\top X M$
makes sense, %and preserves the Albert algebra. 
%Notice that, because $M$ is defined over a copy of the complex numbers,
because
each entry in $\overline{M}^\top X M$ is a sum of terms of the form $m_1xm_2$, where
$m_1$ and $m_2$ lie in this copy of the complex numbers, and so $m_1(xm_2)=(m_1x)m_2$.
%, so the operation is well-defined.
It is clear by restricting to complex matrices $X$ that such an operation can only
preserve the determinant if $|\det M|=1$.
% In fact this condition is also sufficient.
Conversely, %To prove this, 
we use the fact that any complex matrix of determinant $\pm1$ is 
(plus or minus) a product of fundamental transvections, and check explicitly
that the fundamental  transvections preserve the determinant
(see Lemma~\ref{tvpresdet}). Thus any complex matrix of determinant $\pm 1$
preserves the determinant. On the other hand, if $u\overline{u}=1$ but $u\ne\pm 1$,
then it is easy to produce examples to show that $\diag(u,1,1)$ does not preserve the
determinant. Hence the same is true for any matrix of determinant $u$.
Therefore a complex matrix $M$ preserves the determinant if and only if
$\det M=\pm 1$. Negating $M$ if necessary, we may assume $\det M=1$.

%Observe that %, since $M$ is defined over the complex numbers, 
%$\det(M)$ is well-defined.
%It turns out that such
%a complex matrix $M$ preserves the determinant on $\mathbb J$ if and only if
%$\det(M)=\pm1$. (The authors of \cite{DM}
%told me they proved this via a brute force computation with Mathematica.)
Finally, in order for $M$ to preserve the
identity element of the algebra, and hence to lie in $F_4$,
it is necessary and sufficient to have the extra condition $\overline{M}^\top M = I$.
It is shown in \cite{DM} that 
the compact real form of $F_4$ is generated by such elements.

\section%{Some symmetries of the determinant over $\FF_q$}
%{Generators for $SE_6(q)$}
{Split octonions and the Dickson--Freudenthal determinant}
Much the  same constructions  work over finite fields, %Albert algebras,
%with 
except that there are obvious difficulties %exceptions 
in characteristics $2$ and %occasionally in characteristic 
$3$ caused by dividing by $2$ or $3$.
To overcome these difficulties we have to be careful to choose the most useful form
of each definition from the various no-longer-equivalent versions.

For example, the usual `compact' version of the octonions does not work in characteristic $2$,
so we use instead the `split' version, which works over any field.
See for example \cite[Section 4.3.3]{FSG} for the equivalence of the two versions
over finite fields of odd characteristic.
\begin{definition}
If $F$ is any field, the \emph{split octonion algebra} over $F$ is an $8$-dimensional
vector space $\mathbb O=\mathbb O_F$ over $F$, with basis
$\{e_i\mid i\in \pm I\}$, where $I =\{0,1,\omega,\ombar\}$ and $\pm I =
\{{\pm0},{\pm 1},{\pm\omega},{\pm\ombar}\}$, and bilinear
multiplication given by
\begin{enumerate}
\item $e_1e_{\omega}=-e_{\omega}e_1=e_{-\ombar}$;
\item $e_1e_0=e_{-0}e_1=e_1$;
\item $e_{-1}e_1=-e_0$ and $e_0e_0=e_0$;
\end{enumerate}
and images under negating all suffices (including $0$), and multiplying all
suffices by $\omega$, where $\omega^2=\ombar$ and $\omega\ombar=1$.
All other products of basis vectors are $0$.
\end{definition}
Thus $e_{\pm0}$ are orthogonal idempotents, and $e_0+e_{-0}=1$.
This is essentially the same definition as given in (4.37) of \cite{FSG}, 
but with the basis vectors
$x_1,\ldots, x_8$ in \cite{FSG} corresponding respectively to
$e_{-1}$, $e_{\ombar}$, $e_{\omega}$, $e_0$, $e_{-0}$, $e_{-\omega}$,
$e_{-\ombar}$, $e_1$.
In this form of the octonions it no longer makes sense to talk about the `real part'
of $\sum_{i\in \pm I} \lambda_ie_i$ %(where $I=\{\pm0,\pm1,\pm\omega,\pm\ombar\}$)
and we define instead the \emph{trace} by
\begin{eqnarray}
\Tr(\sum_{i\in \pm I} \lambda_ie_i)=\lambda_0+\lambda_{-0}.
\end{eqnarray} 
Similarly the anti-automorphism $x\mapsto \overline{x}$ of the octonions now
takes the form
$$e_0\leftrightarrow e_{-0}, e_i\mapsto -e_i (i\ne\pm0).$$
Note that this anti-automorphism reverses the order of multiplication,
in the sense that $\overline{xy}=\overline{y}.\overline{x}$, as is easily checked
directly from the definition. Moreover, we see that $\Tr(x)=x+\overline{x}$.
It is easy to compute the norm $N(x)=x\overline{x}$ of an arbitrary element
to be
$$N(\sum_{i\in \pm I}\lambda_ie_i) = \sum_{i\in I%\{0,1,\omega,\ombar\}
} \lambda_i\lambda_{-i}.$$
This  norm  can be polarized to obtain an inner product $B$
by
$$B(x,y)=N(x+y)-N(x)-N(y).$$

It is easy to see that $\mathbb O_F$ is non-commutative and non-associative,
so that in general $x(yz)\ne(xy)z$.  However, we do have the following.
\begin{lemma}
If $x,y,z\in\mathbb O_F$, then $\Tr(x(yz))=\Tr((xy)z)$.
\end{lemma}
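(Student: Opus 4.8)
The plan is to turn the identity into a finite check with the given multiplication table. Since the maps $(x,y,z)\mapsto(xy)z$ and $(x,y,z)\mapsto x(yz)$ are trilinear and $\Tr$ is linear, it suffices to treat the case $x=e_i$, $y=e_j$, $z=e_k$; equivalently, it is enough to show that the associator $[x,y,z]:=(xy)z-x(yz)$ has trace $0$ on every triple of basis vectors.

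The point that keeps this short is that $\Tr$ reads off only the coefficients of $e_0$ and $e_{-0}$. From the definition, the only products of two basis vectors with a non-zero $e_0$- or $e_{-0}$-component are $e_0e_0=e_0$, $e_{-0}e_{-0}=e_{-0}$, and $e_ae_{-a}=-e_{-0}$, $e_{-a}e_a=-e_0$ for $a\in\{1,\omega,\ombar\}$; every other product of two basis vectors is $0$ or $\pm e_m$ with $m\ne\pm0$. Hence $\Tr((e_ie_j)e_k)$ can be non-zero only when $e_ie_j=\pm e_m$ and $(e_m,e_k)$ is one of the distinguished pairs above, and symmetrically $\Tr(e_i(e_je_k))$ only when $e_je_k=\pm e_n$ and $(e_i,e_n)$ is distinguished; in each case this forces the indices into a short explicit list. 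Working through that list one finds that whenever $[e_i,e_j,e_k]$ has a non-zero $e_0$- or $e_{-0}$-component it is of the form $\lambda(e_0-e_{-0})$, so its trace is $0$.

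Alternatively one can avoid the case analysis entirely. First verify (a similarly small table computation) that $\mathbb O_F$ is alternative, so that $[x,y,z]$ is alternating in its three arguments. Writing $\overline{w}=\Tr(w)\cdot1-w$, conjugation of any one slot of $[x,y,z]$ kills the scalar part and changes the sign, so together with $\overline{(xy)z}=\overline{z}\,(\overline{y}\,\overline{x})$ one obtains
\[
\overline{[x,y,z]}=-[\overline z,\overline y,\overline x]=[z,y,x]=-[x,y,z],
\]
whence $\Tr([x,y,z])\cdot1=[x,y,z]+\overline{[x,y,z]}=0$ and therefore $\Tr([x,y,z])=0$.

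The main obstacle is not conceptual but clerical: one must enumerate the distinguished triples completely and keep track of signs and of which of $e_0,e_{-0}$ is produced at each stage (on the slick route, the corresponding point is to be careful that ``alternating'' and ``zero on repeated arguments'' agree, including in characteristic $2$). It is worth recording, in keeping with the aims of the paper, that no step here divides by $2$ or $3$: the product, $\Tr$ and the bar map are all defined over the prime field, so both the argument and the conclusion are uniform in the characteristic.
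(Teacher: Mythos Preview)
Your first approach is essentially the paper's: reduce by trilinearity to basis triples $(e_i,e_j,e_k)$ and then carry out a finite check. The paper organises the check a little differently---it observes that both $\Tr(e_i(e_je_k))$ and $\Tr((e_ie_j)e_k)$ vanish unless $i+j+k=\pm 0$, and in the remaining cases proves the stronger statement that $e_i(e_je_k)=(e_ie_j)e_k$ outright, not merely that the traces agree---but the amount of work is comparable, and your claim that the associator lands in $\langle e_0-e_{-0}\rangle$ is in fact weaker than what the paper verifies.

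Your second approach, via alternativity and the identity $\overline{[x,y,z]}=-[x,y,z]$, is a genuinely different route. It is more conceptual and explains \emph{why} the associator has trace zero (it lies in the $-1$-eigenspace of conjugation), and it is pleasantly characteristic-free as you note. The trade-off is that it front-loads a different finite check: you must verify that $\mathbb O_F$ is alternative, which the paper has not established at this point (alternativity appears only in passing later, during the sketch of the Moufang laws). So you exchange one table computation for another of similar size; what you gain is a tidier structural reason for the result.
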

\begin{proof}
Since both sides are trilinear, it suffices to check on a basis. %(Calculation omitted.)
If $i+j+k\ne\pm 0$, then $$\Tr(e_i(e_je_k))=0=\Tr((e_ie_j)e_k).$$ Otherwise 
we show that in fact $e_i(e_je_k)=(e_ie_j)e_k$, as follows.
Using the symmetry we find there are just $8$ cases to check, of which the following are
a representative sample:
\begin{eqnarray}
e_0(e_1e_{-1})=-e_0e_{-0}=&0&=(e_0e_1)e_{-1}\cr
e_0(e_{-1}e_1)=-e_0e_0=&-e_0&=e_{-1}e_1=(e_0e_{-1})e_1\cr
e_1(e_0e_{-1})=&e_1e_{-1}&=(e_1e_0)e_{-1}\cr
e_1(e_{\omega}e_{\ombar})=e_1e_{-1}=&-e_{-0}&=e_{-\ombar}e_{\ombar}=
(e_1e_{\omega})e_{\ombar}
\end{eqnarray}
\end{proof}
Since $\Tr(xy)=\Tr(yx)$, it follows that $\Tr(xyz)$ is independent of bracketing, and
cyclic permutations of $x,y,z$. However, in general we have
$$\Tr(xyz)\ne\Tr(xzy).$$

It is also worth noting that the norm is multiplicative.
\begin{lemma}
If $x,y\in\mathbb O_F$, then $N(xy)=N(x)N(y)$.
\end{lemma}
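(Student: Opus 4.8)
The plan is to reduce the claim to a multilinear identity that can be checked on the basis $\{e_i\}$, with characteristic $2$ handled for free by base change. Observe first that $N(xy)-N(x)N(y)$, expanded as a polynomial in the $16$ coordinates of $x$ and $y$, has coefficients that are universal polynomials in the structure constants of $\mathbb O$; since those structure constants are all $0$ or $\pm1$, the coefficients are fixed integers, the same for every field $F$. Hence it suffices to prove the identity over $\mathbb Q$: if the polynomial vanishes there, all of its (integer) coefficients vanish, so it vanishes over every $F$. Over $\mathbb Q$ one may divide by $2$ freely, which is the whole point.

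Over $\mathbb Q$ I would polarize. Since $B(u,u)=N(2u)-2N(u)=2N(u)$, the identity $N(xy)=N(x)N(y)$ is equivalent to $B(xy,xy)=B(x,x)N(y)$; and because in characteristic $\neq2$ a quadratic form is recovered from its polar bilinear form, polarizing this first in $y$ and then in $x$ makes it equivalent to the fully multilinear identity
$$B(xy,x'y')+B(xy',x'y)=B(x,x')B(y,y')$$
for all $x,x',y,y'\in\mathbb O$. Each polarization step is reversible: setting $y'=y$ (respectively $x'=x$) and using $B(u,u)=2N(u)$ recovers the previous identity, since $2$ is a unit.

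It then remains to verify this identity on basis vectors, i.e. to show
$$B(e_ie_j,e_ke_l)+B(e_ie_l,e_ke_j)=B(e_i,e_k)B(e_j,e_l)\qquad(i,j,k,l\in\pm I).$$
From the norm formula, $B(e_a,e_b)$ equals $1$ if $b=-a$ and $0$ otherwise, and every product $e_ae_b$ is either $0$ or $\pm$ a single basis vector, so this is a purely combinatorial statement about the multiplication table. Most of the $8^4$ a priori cases are vacuous, one of the products involved being $0$, and the remainder collapse under the available symmetries: the diagonal action on $(i,j,k,l)$ of the group of order $6$ generated by negating all suffices and by multiplying all suffices by $\omega$; the anti-automorphism $x\mapsto\overline{x}$, which reverses products and preserves both $N$ and $B$; and the evident invariance of the left-hand side under $i\leftrightarrow k$ and under $j\leftrightarrow l$. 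The only real obstacle is the bookkeeping of this case analysis; as with the preceding lemma, the write-up would record just a representative sample of the nonvacuous cases.

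A more conceptual alternative is available. First establish, again by direct computation, the quadratic identity $x^2=\Tr(x)\,x-N(x)\,1$, equivalently $x\overline{x}=\overline{x}x=N(x)\,1$, together with the alternative laws $x(xy)=x^2y$ and $(yx)x=yx^2$ in $\mathbb O_F$. Since $\overline{x}=\Tr(x)1-x$ and $\overline{y}=\Tr(y)1-y$, Artin's theorem then places $\overline{x}$, $\overline{y}$ and $xy$ in an associative subalgebra, so that
$$N(xy)\,1=\overline{xy}\,(xy)=(\overline{y}\,\overline{x})(xy)=\overline{y}\big((\overline{x}x)\,y\big)=\overline{y}\,(N(x)\,y)=N(x)\,(\overline{y}\,y)=N(x)N(y)\,1,$$
using $\overline{xy}=\overline{y}\,\overline{x}$ and that $N(x)$ and $N(y)$ are scalars; comparing $e_0$-coefficients yields $N(xy)=N(x)N(y)$. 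This merely trades the multilinear check for a verification of the two alternative laws, which is of comparable length.
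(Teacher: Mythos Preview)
Your argument is correct, and both the main route and the Artin-based alternative are sound. The paper takes a closely related but more direct path: instead of fully polarizing and passing through $\mathbb Q$, it keeps $x=\sum_i\lambda_ie_i$ generic and simply writes out the eight products $xe_j$ explicitly (four formulas, plus their images under negating all subscripts). From these one reads off at once that $N(xe_j)=0$ and that $B(xe_j,xe_k)=N(x)\,B(e_j,e_k)$ for all $j,k$; since every $e_j$ is isotropic, the quadratic-form expansion $N\bigl(\sum_j\mu_j v_j\bigr)=\sum_j\mu_j^2N(v_j)+\sum_{j<k}\mu_j\mu_kB(v_j,v_k)$ with $v_j=xe_j$ then yields $N(xy)=N(x)N(y)$ immediately, over any field. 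In effect the paper polarizes only in $y$, leaving $x$ arbitrary, which sidesteps the characteristic-$2$ obstruction that forced your detour through $\mathbb Q$; your fully multilinear identity is more symmetric and organizes the case analysis a little more cleanly, but at the cost of that extra step. Your Artin-based alternative is the standard composition-algebra proof and is fine, though in the paper's ordering the alternative laws are only established later, in the course of the Moufang-law lemma.
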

\begin{proof}
We multiply the basis vectors on the left by an arbitrary element
of $\mathbb O_F$, say $$x=
\sum_{i\in \pm I} \lambda_ie_i,$$ %\in \mathbb O_F$, 
and obtain
\begin{eqnarray}
xe_0&=&\lambda_0e_0+\lambda_1e_1+\lambda_\omega e_{\omega} + \lambda_{\ombar}
e_{\ombar}\cr
xe_1&=&-\lambda_{-1}e_0+\lambda_{-0}e_1+
\lambda_{\ombar} e_{-\omega} - \lambda_{\omega}
e_{-\ombar}\cr
xe_{\omega}&=&-\lambda_{-\omega}e_0-\lambda_{\ombar}e_{-1}+
\lambda_{-0}e_{\omega} + \lambda_{1}
e_{-\ombar}\cr
xe_{\ombar}&=&-\lambda_{-\ombar}e_0+\lambda_\omega e_{-1}
-\lambda_1 e_{-\omega} + \lambda_{-0}
e_{\ombar}
\end{eqnarray}
and the corresponding equations with all subscripts negated,
from which it is easy to see that $N(xe_i)=0$, and
the inner products of distinct basis vectors are
all multiplied by $N(x)=\sum_{i\in I%\{0,1,\omega,\ombar\}
} \lambda_i\lambda_{-i}$.
Hence the result follows by linearity.
\end{proof}
The final basic property of the split octonions %which we shall need 
is the
\emph{Moufang law}
which comes in three equivalent versions.
\begin{lemma}
For all $x,y,z,\in\mathbb O$, the following identities hold:
\begin{eqnarray}
x(yz)x&=&(xy)(zx),\cr
x(yzy)&=&((xy)z)y,\cr
(xyx)z&=&x(y(xz)).
\end{eqnarray}
\end{lemma}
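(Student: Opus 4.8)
Since a direct check on basis vectors is impractical (the identities are not multilinear --- in each, some variable occurs twice), I would instead bootstrap everything from the multiplicativity $N(xy)=N(x)N(y)$ of the norm. First note that $B(u,v)=u\overline{v}+v\overline{u}$, from $N(w)=w\overline{w}$ and linearity of conjugation; in particular $B(u,1)=\Tr(u)$, and, using $\overline{xy}=\overline{y}\,\overline{x}$, also $B(x,\overline{y})=\Tr(xy)$. Polarizing $N(xy)=N(x)N(y)$ first in $y$ and then in $x$ gives $B(xy,xy')=N(x)B(y,y')$ and $B(xy,x'y')+B(x'y,xy')=B(x,x')B(y,y')$, and the same computation starting from $N(yx)=N(y)N(x)$ gives $B(yx,y'x)=N(x)B(y,y')$. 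Setting $x'=1$ (respectively $y'=1$) and using $\overline{x}=\Tr(x)\cdot 1-x$ produces the adjunction formulas $B(xa,b)=B(a,\overline{x}b)$ and $B(ax,b)=B(a,b\overline{x})$.

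Next I would deduce the four Kirmse-type identities. For instance $B(\overline{x}(xa),b)=B(xa,xb)=N(x)B(a,b)$ for every $b$; since the split norm form is non-degenerate over any field (its Gram matrix on the given basis is the permutation exchanging $e_i$ and $e_{-i}$), this forces $\overline{x}(xa)=N(x)a$, and in exactly the same way $x(\overline{x}a)=N(x)a$, $(ax)\overline{x}=N(x)a$ and $(a\overline{x})x=N(x)a$. Linearizing $x(\overline{x}a)=N(x)a$ in $x$ and putting $\overline{y}$ in for one of the variables gives the auxiliary identity $x(yb)+\overline{y}(\overline{x}b)=\Tr(xy)\,b$. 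Taking $b=x$ and using $\overline{x}x=x\overline{x}=N(x)$ gives $x(yx)=\Tr(xy)x-N(x)\overline{y}$; linearizing $(ba)\overline{a}=N(a)b$ in the same fashion and taking $b=x$ gives $(xy)x=\Tr(xy)x-N(x)\overline{y}$ as well. Hence the flexible law $x(yx)=(xy)x$ holds, and the common value $xyx=\Tr(xy)x-N(x)\overline{y}$ is unambiguous.

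The three Moufang identities now follow by short substitutions. \emph{Left law.} From $xyx=\Tr(xy)x-N(x)\overline{y}$ we get $(xyx)z=\Tr(xy)(xz)-N(x)(\overline{y}z)$, while the auxiliary identity with $b=xz$, together with $\overline{x}(xz)=N(x)z$, gives $x(y(xz))=\Tr(xy)(xz)-N(x)(\overline{y}z)$; these agree. \emph{Middle law.} Replacing $y$ by $yz$ in $xyx=\Tr(xy)x-N(x)\overline{y}$, and using that $\Tr$ of a triple product is independent of bracketing (the first lemma) and $\overline{yz}=\overline{z}\,\overline{y}$, gives $x(yz)x=\Tr(xyz)x-N(x)(\overline{z}\,\overline{y})$; applying the auxiliary identity with $(xy,z)$ in place of $(x,y)$ and $b=x$, and noting $\overline{xy}\cdot x=(\overline{y}\,\overline{x})x=N(x)\overline{y}$, gives $(xy)(zx)=\Tr(xyz)x-N(x)(\overline{z}\,\overline{y})$, matching. \emph{Right law.} Apply the conjugation anti-automorphism to the (now proved) left law and rename, using $\overline{uv}=\overline{v}\,\overline{u}$. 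The only real labour is the polarization bookkeeping in the first two steps and spotting the right auxiliary identity; there is no essential obstacle --- and, the point of the paper, not a single step divides by $2$ or $3$, so the whole argument is valid over an arbitrary field.
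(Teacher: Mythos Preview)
Your proof is correct and is the standard composition-algebra argument: from $N(xy)=N(x)N(y)$ and non-degeneracy of $B$ you extract the adjunction and Kirmse identities, hence flexibility with the closed form $xyx=\Tr(xy)\,x-N(x)\,\overline{y}$, and the three Moufang laws drop out by substitution. Every step is characteristic-free, and your use of the paper's earlier lemma $\Tr((xy)z)=\Tr(x(yz))$ in the middle-law step is legitimate since that lemma precedes this one.

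The paper takes a very different route. It does \emph{not} attempt a purely basis-vector check (your opening remark is slightly off target here): instead it keeps $x=\sum_i\lambda_ie_i$ fully general, uses bilinearity in $y$ and $z$ to reduce those two to basis vectors, and then grinds through the cases, exploiting the symmetry group of the index set to cut the work down to a handful of explicit evaluations of $xe_i$, $e_ix$, $(xe_i)x$ for $i=0,1$. Only the middle law is sketched, and the final verifications are left to the reader. What the paper's approach buys is total self-containment with no algebraic machinery; what yours buys is a clean, case-free argument that simultaneously delivers flexibility, the alternative laws, and all three Moufang identities, and that visibly generalises to any composition algebra with non-degenerate norm.
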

\begin{proof}
As we shall not need these identities in the rest of the paper, % sequel,
we merely sketch the proof of the first one. %, which is the one we shall need.
Let $x=\sum_{i\in \pm I} \lambda_i e_i$. By bilinearity we need only check the identity
for $y,z$ in the basis $\{e_i\mid i\in\pm I\}$, and by symmetry we may assume
$y=e_0$ or $e_1$. We first compute the following:
\begin{eqnarray}
xe_0&=&\lambda_0e_0+\lambda_1e_1+\lambda_\omega e_{\omega} + \lambda_{\ombar}
e_{\ombar}\cr
e_0x&=&\lambda_0e_0+\lambda_{-1}e_{-1}+\lambda_{-\omega} e_{-\omega} 
+ \lambda_{-\ombar}
e_{-\ombar}\cr
(xe_0)x=x(e_0x) &=& \lambda_0^2e_0 - (\lambda_1\lambda_{-1}+\lambda_{\omega}\lambda_{-\omega}
+\lambda_{\ombar}\lambda_{-\ombar})e_{-0}\cr
&&+\lambda_0(\lambda_1e_1+\lambda_{\omega}e_{\omega}
+\lambda_{\ombar}e_{\ombar}+\lambda_{-1}e_{-1}+\lambda_{-\omega}e_{-\omega}
+\lambda_{-\ombar}e_{-\ombar})\cr
xe_1&=&-\lambda_{-1}e_0+\lambda_{-0}e_1+
\lambda_{\ombar} e_{-\omega} - \lambda_{\omega}
e_{-\ombar}\cr
e_1x&=&-\lambda_{-1}e_{-0}+\lambda_{0}e_1-
\lambda_{\ombar} e_{-\omega} + \lambda_{\omega}
e_{-\ombar}\cr
(xe_1)x=x(e_1x)&=& \lambda_{-1}^2e_{-1} + (\lambda_0\lambda_{-0}+
\lambda_{\omega}\lambda_{-\omega}+\lambda_{\ombar}\lambda_{-\ombar})e_1\cr
&&+\lambda_{-1}(\lambda_0e_0+\lambda_{-\omega}e_{-\omega}+\lambda_{-\ombar}e_{-\ombar}
-\lambda_{-0}e_{-0}-\lambda_{\omega}e_{\omega}-\lambda_{\ombar}e_{\ombar})
\end{eqnarray}
In particular, we deduce by linearity that $(xy)x=x(yx)$ for all $x,y\in \mathbb O$.
(Similar calculations show that $x(xy)=(xx)y$ and $(yx)x=y(xx)$.)
We now have to calculate the left-hand side of the identity in the following cases,
and check equality with the right-hand side, which is either zero or given above:
\begin{eqnarray}
y=e_0,z=e_0,&&yz=e_0\cr
y=e_0,z=e_{-0}&& yz=0\cr
y=e_0,z=e_1,&&yz=0\cr
y=e_{-0},z=e_{1},&&yz=e_1\cr
y=e_1,z=e_1,&&yz=0\cr
y=e_1,z=e_{-\omega},&&yz=0\cr
y=e_{-1},z=e_{1},&&yz=e_0\cr
y=e_{-\omega},z=e_{-\ombar},&&yz=e_{1}
\end{eqnarray}
These calculations are left to the reader.
\end{proof}
\begin{definition}
Let $\mathbb J=\mathbb J_F$ be the set of $3\times 3$ Hermitian matrices with entries in $\mathbb O_F$, that 
is matrices
\begin{eqnarray}
X=
(a,b,c\mid A,B,C) &=& \begin{pmatrix}a&C&\overline{B}\cr \overline{C}&b&A\cr
B&\overline{A}&c\end{pmatrix}
\end{eqnarray}
with $a=\overline{a}$, $b=\overline{b}$, $c=\overline{c}$.
The \emph{trace} of $X$ is $\Tr(X)=a+b+c$,
the \emph{norm} of $X$ is
\begin{eqnarray}
Q(X)&=&A\overline{A}+B\overline{B}+C\overline{C}-ab-ac-bc
\end{eqnarray}
and the \emph{Dickson--Freudenthal determinant} of $X$ is
\begin{eqnarray}
\det(X) &=& abc-aA\overline{A}-bB\overline{B}-cC\overline{C}
+\Tr(ABC).
\end{eqnarray}
\end{definition}
(The definition of the determinant in \cite{FSG}, in (4.130) and elsewhere,
is wrong.)
Notice that we are \emph{not} defining a Jordan product on $\mathbb J$, so
$\mathbb J$ is \emph{not} a Jordan algebra. 

We show next that the Dickson--Freudenthal determinant as defined here is equivalent to
Dickson's original cubic form \cite{Dickson1}
in $27$ variables. First define $27$ variables $a,b,c,A_i,B_i,C_i$, 
where $A=\sum_{i\in \pm I} A_ie_i$
and similarly for $B_i$ and $C_i$. Then we calculate the determinant as
\begin{eqnarray}
\det(X)&=&abc-\sum_{i\in I%\{0,1,\omega,\ombar\}
} (aA_iA_{-i}+bB_iB_{-i}+cC_iC_{-i})\cr &&\qquad
+\sum_{i+j+k=\pm 0}(\Tr(e_ie_je_k))A_iB_jC_k,
\end{eqnarray}
where the coefficients $\Tr(e_ie_je_k)$ 
of the $32$ terms in the last sum are all $\pm 1$.
Further calculation gives $$\Tr(e_ie_je_k)=+1$$ when $(i,j,k)$ is a cyclic rotation of a multiple of
$(0,0,0)$ or $(1,\ombar,\omega)$, and  $$\Tr(e_ie_je_k)=-1$$ for cyclic rotations of multiples of
$(1,\omega,\ombar)$ or $(1,0,-1)$.

Dickson's $27$ variables were called $x_i$, $y_j$ and $z_{ij}=-z_{ji}$, where $i,j
\in\{1,2,3,4,5,6\}$, and the cubic form is
\begin{eqnarray}
&&\sum_{i,j} x_iy_jz_{ij} + \sum z_{ij}z_{kl}z_{mn}
\end{eqnarray}
where the second sum is over all 
partitions $\{\{i,j\},\{k,l\},\{m,n\}\}$ of $\{1,2,3,4,5,6\}$, ordered so that
$ijklmn$ is an even permutation of $123456$.

To translate between the two cubic forms, let $a=z_{13}$, $b=z_{26}$, $c=z_{45}$,
and the other $24$ variables as follows.
\begin{eqnarray}
\begin{array}{c|ccc|ccc|}
i&A_i&B_i&C_i&A_{-i}&B_{-i}&C_{-i}\cr\hline
0&z_{25}&z_{43}&z_{16}&z_{46}&z_{15}&z_{23}\cr
1&y_3&y_6&y_5&x_1&x_2&x_4\cr
\omega&x_3&x_6&x_5&-y_1&-y_2&-y_4\cr
\ombar & z_{56}&z_{35}&z_{63}&z_{42}&z_{14}&z_{21}\cr\hline
\end{array}
\end{eqnarray}
Observe that the symmetry $(a,b,c)(A,B,C)$ corresponds to $(1,2,4)(3,6,5)$,
so that we only need to check $17$ of the $45$ terms. 
The (easy) calculations are omitted---in fact the determinant is
exactly the negative of Dickson's cubic form.

Hence we may
interpret the determinant of the split Jordan algebra
as a cubic form over any field, and then follow Dickson and define
$SE_6(q)$ for any $q$ to be the group of $\mathbb F_q$-linear maps which preserve
this cubic form over $\FF_q$.
Similarly, we may define $F_4(q)$ to be the subgroup of $SE_6(q)$
consisting of those maps which fix the identity element.
Notice in particular that we now have a definition of $F_4(q)$ in characteristic $2$
which completely avoids the need for introducing the `quadratic Jordan algebras'
of McCrimmon \cite{McCrimmon}.

\section{Some elements of $E_6(q)$}
% We start with groups of type $E_6$, and first write down
%enough complex matrices of determinant $1$ to generate the group.
In this section we write down some elements of $SE_6(q)$, which we shall later
show are enough to generate the whole group. All these elements will be encoded as
$3\times 3$ matrices $M$, written over some commutative subring of $\mathbb O$, and
acting on $X\in \mathbb J$ via $X\mapsto \overline{M}^\top X M$.
In fact, most of the proofs in this section also work for arbitrary octonion
algebras over arbitrary fields.

First 
observe that the coordinate permutations, generated by
\begin{eqnarray}
(a,b,c\mid A,B,C)&\mapsto&(c,a,b\mid C,A,B)\cr
(a,b,c\mid A,B,C)&\mapsto&(a,c,b\mid\overline{A},\overline{C},\overline{B})
\end{eqnarray}
preserve the determinant. These are encoded respectively by the matrices
$$\begin{pmatrix}0&1&0\cr 0&0&1\cr 1&0&0\end{pmatrix}, 
\begin{pmatrix}1&0&0\cr 0&0&1\cr 0&1&0\end{pmatrix}.$$

\begin{lemma}
\label{tvpresdet}
 Let
$$M_x=\begin{pmatrix}1&x&0\cr 0&1&0\cr 0&0&1\end{pmatrix},$$
for any $x\in\mathbb O$.
%It is a straightforward exercise to show that this maps
If $X =(a,b,c\mid A,B,C)$ then %to
%$$(a,a+b+C+\overline{C},c\mid A+\overline{B},B,a+C).$$
$$\overline{M_x}^\top XM_x=(a,ax\overline{x}+b+(\overline{x}C+\overline{C}x),c\mid A+
\overline{x}.\overline{B},B,ax+C),$$
and $\det(\overline{M_x}^\top XM_x) = \det(X)$.
\end{lemma}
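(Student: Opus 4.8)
The plan is to prove both assertions by direct computation, since the transformation $M_x$ is a single elementary ``transvection'' matrix and everything in sight is polynomial in the entries.

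First I would compute $\overline{M_x}^\top X M_x$ explicitly. Here $M_x$ has entries in the commutative (indeed associative) subring of $\mathbb O$ generated by $x$, so the matrix product $\overline{M_x}^\top X M_x$ makes sense as noted in Section~2 (every entry is a sum of terms $m_1 y m_2$ with $m_1,m_2$ in this commutative subring). Writing $M_x$ as $I + xE_{12}$, we have $\overline{M_x}^\top = I + \overline{x}E_{21}$, so
$$\overline{M_x}^\top X M_x = (I + \overline{x}E_{21})\,X\,(I + xE_{12}) = X + \overline{x}E_{21}X + X x E_{12} + \overline{x}E_{21}Xx E_{12}.$$
Reading off the Hermitian-matrix entries of $X=(a,b,c\mid A,B,C)$ and multiplying out, the $(1,1)$, $(1,3)$, $(3,1)$, $(2,3)$, $(3,2)$, $(3,3)$ positions are easily seen to be unchanged except that the $(2,3)$ entry $A$ becomes $A + \overline{x}\,\overline{B}$ and the $(2,2)$ entry $b$ becomes $b + \overline{x}C + \overline{C}x + ax\overline{x}$ (the last term using $a=\overline a$ and $N(x)=x\overline x$), while the $(1,2)$ entry $C$ becomes $ax + C$; this is exactly the claimed formula, and one checks it is again Hermitian. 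This part is purely routine bookkeeping in octonion arithmetic, using only associativity within the commutative subring generated by $x$.

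For the determinant invariance I would then substitute the transformed matrix into the definition $\det(X) = abc - aA\overline A - bB\overline B - cC\overline C + \Tr(ABC)$ and expand, grouping terms by degree in $x$. The degree-zero part is $\det(X)$, and the plan is to check that the coefficients of the positive-degree contributions all cancel. Abbreviating the new triple as $(a,b',c\mid A',B,C')$ with $A' = A + \overline{x}\,\overline B$, $B'=B$, $C' = ax+C$, $b' = b + \overline{x}C + \overline{C}x + aN(x)$, the terms to track are: the change in $abc$, namely $ac\,(\overline x C + \overline C x + aN(x))$; the change in $-aA\overline A$, which by the norm identity $N(A+\overline x\,\overline B)=N(A)+B(\,\overline A, \overline x\,\overline B\,)+N(x)N(B)$ contributes $-a\big(B(\overline A,\overline x\,\overline B) + N(x)N(B)\big)$; no change in $-bB\overline B$ beyond replacing $b$ by $b'$, giving $-(\overline x C + \overline C x + aN(x))N(B)$; the change in $-cC\overline C$, which is $-c\big((ax)\overline C + C\overline{ax} + N(ax)\big) = -c\big(a(x\overline C + C\overline x) + a^2 N(x)\big)$; and the change in $\Tr(ABC)$, where $A'B'C' = (A+\overline x\,\overline B)B(ax+C)$ expands into $\Tr(ABC)$ plus $\Tr(AB\cdot ax) + \Tr(\overline x\,\overline B\,B\,C) + \Tr(\overline x\,\overline B\,B\,ax)$. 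The hard part will be controlling these $\Tr$ terms: here I would use the results of Section~3 that $\Tr(uvw)$ is independent of bracketing and invariant under cyclic permutation, together with $\overline B B = N(B)$ (a scalar, so it commutes and pulls out of the trace) and the scalar–commutativity facts $\Tr(a\cdot uvw) = a\,\Tr(uvw)$; this reduces the trace contributions to $a\,\Tr(ABx) + N(B)\,\Tr(\overline x\, C) + aN(B)\,\Tr(\overline x\, x)$, i.e. $a\,\Tr(ABx) + N(B)(x\overline C + \overline x C)\cdots$ after using $\Tr(\overline x C) = \overline x C + \overline{\overline x C} = \overline x C + \overline C x$ and $\Tr(\overline x x) = 2N(x)$.

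Collecting everything: the $N(B)$-terms cancel between the change in $-bB\overline B$ and the new trace contribution $N(B)(\overline x C + \overline C x + aN(x))$ (the $aN(B)N(x)$ from $-aN(A\overline A)$ pairs off with the $aN(x)N(B)$ from $\Tr(\overline x\,\overline B\,B\,ax)$, which equals $aN(B)N(x)$ since $\Tr(\overline x x)=2N(x)$ — I would double-check the factor of $2$ against the convention $N(x)=x\overline x$ rather than $\tfrac12\Tr$-type normalizations, this being the one genuine place to be careful in characteristic $2$). The terms with factor $ac$: $ac(\overline x C + \overline C x)$ from $abc$ versus $-c\,a(x\overline C + C\overline x)$ from $-cC\overline C$ — these cancel since $\overline x C + \overline C x = \Tr(\overline x C)$ is a scalar equal to $x\overline C + C\overline x = \Tr(x\overline C)$. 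The $a^2 N(x)$ terms: $a^2 c N(x)$ from $abc$ versus $-c\,a^2 N(x)$ from $-cC\overline C$ cancel. Finally the term $-a\,B(\overline A, \overline x\,\overline B)$ from $-aA\overline A$ must cancel $a\,\Tr(ABx)$ from the trace; this is the identity $\Tr(ABx) = B(\overline A,\overline x\,\overline B) = \Tr(\overline A\cdot \overline x\,\overline B) + \Tr(\overline{\overline x\,\overline B}\cdot A)$-type relation, which follows from $\Tr(uv) = B(\overline u, v)$ together with cyclicity and the anti-automorphism property $\overline{uv}=\overline v\,\overline u$ from Section~3 — I would verify $\Tr(ABx) = \Tr(\overline x\,\overline B\,\overline A) $ via $\Tr(w) = \Tr(\overline w)$ and then recognize this as $B(\overline A, \overline x\,\overline B)$. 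With all positive-degree terms cancelled, $\det(\overline{M_x}^\top X M_x) = \det(X)$, as required. The only real obstacle is organizing the trace cancellations cleanly using only the bracket-independence and cyclicity established earlier, rather than full associativity; everything else is mechanical.
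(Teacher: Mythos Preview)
Your approach is essentially the paper's: compute $\overline{M_x}^\top X M_x$ directly, then expand each summand of $\det$ under the substitution and verify that all cross terms cancel. The only organizational difference is that the paper splits $\Tr(ABC)=(AB)C+\overline{C}(\overline{B}\,\overline{A})$ into its two halves and tracks six determinant terms separately, whereas you keep $\Tr(A'B C')$ as one package and reduce using cyclicity of $\Tr$ together with the alternative-law consequence $(y\overline{B})B=N(B)y$; both bookkeeping schemes lead to the same cancellations. Your flagged worry about the factor of $2$ is harmless: the trace contribution $a\,\Tr((\overline{x}\,\overline{B})(Bx))=aN(B)\Tr(\overline{x}x)=2aN(B)N(x)$ is matched by \emph{two} copies of $-aN(B)N(x)$ (one from the change in $-aA\overline{A}$, one from the change in $-bB\overline{B}$), so the cancellation is exact in every characteristic. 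One small slip: the cross term in $N(A+\overline{x}\,\overline{B})$ is $B(A,\overline{x}\,\overline{B})=A(Bx)+(\overline{x}\,\overline{B})\overline{A}$, not $B(\overline{A},\overline{x}\,\overline{B})$, and this is precisely $\Tr(ABx)$, giving the final cancellation you want.
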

%In particular, we see that the trace is not in general preserved.
%However, using the formula given above
%$$\det(a,b,c\mid A,B,C) = abc-aA\overline{A}-bB\overline{B}-cC\overline{C}
%+\Re(ABC)+\Re(CBA)$$
%it is straightforward to check that the determinant is preserved.
\begin{proof}
The calculation of $\overline{M_x}^\top XM_x$ is an easy exercise.
The individual terms of the determinant are as follows:
%\begin{eqnarray}
%abc&\mapsto& abc + a^2c + ac(C+\overline{C})\cr
%-aA\overline{A}&\mapsto& -aA\overline{A}-aAB-a\overline{B}.\overline{A}-a\overline{B}B\cr
%-bB\overline{B}&\mapsto&-bB\overline{B}-aB\overline{B}-(C+\overline{C})B\overline{B}\cr
%-cC\overline{C}&\mapsto&-cC\overline{C}-ac(C+\overline{C})-a^2c\cr
%(AB)C&\mapsto& (AB)C+(\overline{B}B)C+a\overline{B}B +aAB\cr
%(\overline{A}.\overline{B})\overline{C}&\mapsto&
%(\overline{A}.\overline{B})\overline{C}+a\overline{B}B+(B\overline{B})\overline{C}%
%+a\overline{B}.\overline{A}
%\end{eqnarray}
\begin{eqnarray}
abc&\mapsto& abc + a^2cx\overline{x} + ac(\overline{x}C+\overline{C}x)\cr
-aA\overline{A}&\mapsto& -aA\overline{A}-aA(Bx)-a(\overline{x}.\overline{B}).\overline{A}-ax\overline{x}\overline{B}B\cr
-bB\overline{B}&\mapsto&-bB\overline{B}-ax\overline{x}B\overline{B}-(\overline{x}C+\overline{C}x)B\overline{B}\cr
-cC\overline{C}&\mapsto&-cC\overline{C}-ac(\overline{x}C+\overline{C}x)-
a^2cx\overline{x}\cr
(AB)C&\mapsto& (AB)C+(\overline{B}B)\overline{x}C+ax\overline{x}B\overline{B} +a(AB)x\cr
\overline{C}.(\overline{B}.\overline{A})&\mapsto&
\overline{C}.(\overline{B}.\overline{A})+ax\overline{x}\overline{B}B+(B\overline{B})\overline{C}x
+a\overline{x}(\overline{B}.\overline{A})
\end{eqnarray}
and it is easy to see that all the terms on the right-hand side
cancel out, except those in $\det(a,b,c\mid A,B,C)$.
\end{proof}

Now if two matrices $M$ and $N$ both lie in $SE_6(q)$, and are both written over
the same $2$-dimensional subring of the octonions, then there is sufficient
associativity to show that the action of $M$ followed by the action of $N$ is the same as
the action of $MN$, that is
$$(\overline{MN})^\top X(MN) = \overline{N}^\top(\overline{M}^\top X M) N.$$ 
In other words, we can multiply together the generators of
$SE_6(q)$ as long as the entries stay within the same $2$-dimensional subring.

In this way we obtain $48$ root groups by putting $x=\lambda e_i$
(for arbitrary $\lambda\in F$ and fixed $i$) in one of the six off-diagonal positions.

Indeed, more is true. If we apply the matrices $M_x$ and $M_y$ in turn
to $X$ we obtain
\begin{eqnarray}
&&(a, b + ax\overline{x} + ay\overline{y} + (\overline{x}C+\overline{C}x)
+ (\overline{y}(ax+C)+(\overline{ax+C})y),c \mid\cr&&
\qquad A+\overline{x}\overline{B}+\overline{y}\overline{B},C+ax+ay),
\end{eqnarray}
which is the same as the image of $X$ under the action of $M_{x+y}$. Thus the matrices $M_x$ generate
an elementary abelian group of order $q^8$. Similarly, if we follow $M_x$ by
$$\begin{pmatrix}1&0&y\cr 0&1&0\cr 0&0&1\end{pmatrix},$$
we obtain
$$(a,b+ax\overline{x}+\overline{x}C+\overline{C}x,
c+ay\overline{y}+By+\overline{y}\overline{B}\mid
A+\overline{x}\overline{B}+\overline{C}y, B+a\overline{y}, C+ax)$$
so in fact we obtain an elementary abelian group of order $q^{16}$ in this way.

More elements may be obtained by the following computations in one of the
$2\times 2$ blocks. If $u\in\mathbb O$ is invertible, then we have
$$\begin{pmatrix}1&u-1\cr 0&1\end{pmatrix}\begin{pmatrix}1&0\cr 1&1\end{pmatrix}
\begin{pmatrix}1&{u}^{-1}-1\cr 0&1\end{pmatrix}\begin{pmatrix}1&0\cr-u&1\end{pmatrix} =
\begin{pmatrix}u&0\cr 0&{u}^{-1}\end{pmatrix}.$$
%and if $u\overline{u}=-1$ similarly
%$$\pmatrix{1&u-1\cr 0&1}\pmatrix{1&0\cr 1&1}
%\pmatrix{1&-\overline{u}-1\cr 0&1}\pmatrix{1&0\cr-u&1} =
%\pmatrix{u&0\cr 0&-\overline{u}}.$$
%We thereby obtain $18$ more root groups, by taking
%$u=1+\lambda e_i$, for $i\in \{\pm1,\pm\omega,\pm\ombar\}$, in each of the three positions.
%The remaining $6$ root groups **
%This makes up the total of $72$ root groups which we expect.
% Next
%we study the group generted by 
Hence
the group contains the diagonal matrices
$$M=\diag(u,\overline{u},1)=\begin{pmatrix}u&0&0\cr 0&\overline{u}&0\cr 0&0&1\end{pmatrix},$$
where $u\in\mathbb O$ satisfies $u\overline{u}=1$, which acts on $\mathbb J$ as 
$$(a,b,c\mid A,B,C)\mapsto(a,b,c\mid uA, Bu, \overline{u}C\overline{u}).$$
%so generate the spin group $2^2.\POmega_8^+(q)$ 
%(for $q$ odd), as noted on page 150 of \cite{FSG}.
%The duality and triality automorphisms are given by permutation matrices.
%Moreover, all these matrices 
By using the Moufang law one can show directly that these matrices
preserve the determinant, though of course this follows from
the calculations already done. %The hardest part of this
%easy calculation is to show that 
%$\Tr(uA)(Bu)(\overline{u}C\overline{u})=\Tr(ABC)$. 
Since we have $(uA)(Bu)=u(AB)u$, 
repeated use of the identities $\Tr(AB)=\Tr(BA)$ and $\Tr(A(BC))=\Tr((AB)C)$
implies that $$\Tr((uA)(Bu)(\overline{u}C\overline{u}))=\Tr(ABC).$$
The other terms in the determinant are easy to deal with.
%gives the result.
%$(u(AB)u)(\overline{u}C\overline{u})$ is the real inner product of
%$u(AB)u$ with $u\overline{C}u$, which is equal to the inner product of
%$AB$ with $\overline{C}$, that is the real part of $ABC$.

Next we analyse the group generated by these diagonal matrices.
Consider the action on $C$, that is the map $C\mapsto \overline{u}C\overline{u}$.
Since reflection in $1$ is the map $x\mapsto -\overline{x}$, reflection  in $u$ is the
map $y\mapsto -\overline{u}y\overline{u}$, and the given action is the composition of
these two maps. As $u$ ranges over all octonions of norm $1$, therefore, the action
generated is that of $\Omega_8^+(q)$.
Indeed, by using all of the diagonal matrices we can get a similar result for reflections in
vectors $u$ of arbitrary norm, and hence get an
action of $\SO_8^+(q)$. The kernel of this action is given by $u\in \mathbb F_q$,
and thus we have an action of a group of shape $C_{q-1}.\SO_8^+(q)$ 
on $\mathbb J$.

Now extend this to the action on the $10$-space of matrices  of the form
$$(a,b,0\mid 0,0,C).$$ The elements $M_x$ and their transposes extend the
action $\SO_8^+(q)$ to $\SO_{10}^+(q)$, preserving the norm
$C\overline{C}-ab$. Again we have a kernel of order $q-1$, giving a group
of shape $C_{q-1}.\SO_{10}^+(q)$.
%These correspond to the $48$ roots which are outside $D_4$.
 %There are other nice elements such as the diagonal matrices $\diag(a,b,c)$ with 
%$a,b,c\in\FF_q$ and $abc=1$. These extend the $4$-dimensional torus found inside
%$\Spin_8^+(q)$ to the $6$-dimensional torus of $E_6$.
%In particular, we can make the commutator of the new generator given above with suitable
%diagonal elements $\diag(u^{-1},1,u)$ to make some more root elements, with $1-u$
%in the off-diagonal position. For example we may take $u=1+e_i$.
%(In characteristic $2$ this calculation does not directly give the root elements
%with $i=\pm0$, since  in characteristic $2$ the elements 
%$1+e_0=e_{-0}$ and $1+e_{-0}=e_0$ are not invertible, 
%but by doing the calculations in characteristic $0$ we see that these
%root elements preserve the characteristic $0$ determinant, and so this remains true after
%reduction modulo $2$.)

\section{The white points}
In order to calculate the group order we count the `rank 1' matrices, otherwise
known as the `white' vectors.
In order to obtain a construction which works also in characteristics 
$2$ and $3$, 
we define these purely in terms of the determinant.
\begin{definition}
For a fixed non-zero $W\in\mathbb J$, the expression 
$\det(W+X)$ is a cubic form in the variables of $X$,
and has a cubic term $\det(X)$, a quadratic term, a linear term, and a constant term
$\det(W)$.
\begin{enumerate}
\item If the linear term is identically zero, then $W$ is called \emph{white}.
\item If the constant term $\det(W)$ is non-zero, then $W$ is called \emph{black}.
\item Otherwise, $W$ is called \emph{grey}.
\end{enumerate}
A \emph{white/grey/black point} is a $1$-dimensional subspace spanned by a 
white/grey/black vector.
\end{definition}
By analogy with ordinary $3\times 3$ matrices, we may think of white, grey and black
matrices as having \emph{rank} $1,2,3$ respectively.
For example, $(1,0,0\mid0,0,0)$ is white because
\begin{eqnarray}
\det(1+a,b,c\mid A,B,C) &=& bc-A\overline{A}+\det(a,b,c\mid A,B,C)
\end{eqnarray}
has zero linear term. Similarly, $(1,1,1\mid 0,0,0)$ is black because it has
determinant $1$. Finally, $(1,1,0\mid 0,0,0)$ is grey because
\begin{eqnarray}
\det(1+a,1+b,c\mid A,B,C) &=& c + (a+b)c -A\overline{A}-B\overline{B}
\cr&&\qquad+\det(a,b,c\mid A,B,C)
\end{eqnarray}
has zero constant term but non-zero linear term.
The terms white, grey and black were introduced by Cohen and Cooperstein \cite{CC}.
Jacobson \cite{Jac3} uses the equivalent terms rank 1, rank 2 and rank 3.
Aschbacher \cite{Asch1} calls them respectively singular, brilliant non-singular, and dark.

%may take these to be defined as in \cite{FSG}, which
%effectively means the following.
\begin{lemma}\label{whitedesc}
A non-zero element $(a,b,c\mid A,B,C)$
of $\mathbb J$ %the Jordan algebra 
is {white} if and only if one of the
following holds:
\begin{enumerate}
\item at least one of the diagonal entries (say $c$) is
non-zero, and $(a,b,c\mid A,B,C)$ is of the form $c\overline{v}^\top v$, where
$v=(x,y,1)=(B/c,\overline{A}/c,1)$, or
\item $a=b=c=0$,
$A\overline{A}=B\overline{B}=C\overline{C}=0$
 %it is of the form
%$(0,0,0\mid A,B,C)$ with 
and $AB=BC=CA=0$. 
\end{enumerate}
\end{lemma}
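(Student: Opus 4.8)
The plan is to compute the linear term of $\det(W+X)$ explicitly as a function of $X$, using the formula for $\det$ together with the polarization identities, and then read off the vanishing condition. Writing $W=(a,b,c\mid A,B,C)$ and $X=(a',b',c'\mid A',B',C')$, the linear-in-$X$ part of $\det(W+X)$ is a linear form $\ell_W(X)$, which by the non-singularity of the trace form (Lemma on $\Tr((X\circ Y)\circ Z)$ being associative, or directly) can be written as $\ell_W(X)=\Tr(W^{\#}\circ X)$ for a suitable \emph{adjoint} $W^{\#}\in\mathbb J$. A short direct computation from the determinant formula gives
\begin{eqnarray}
W^{\#}&=&(bc-A\overline A,\ ca-B\overline B,\ ab-C\overline C\mid \overline{B}\,\overline{C}-aA,\ \overline{C}\,\overline{A}-bB,\ \overline{A}\,\overline{B}-cC).
\end{eqnarray}
Thus $W$ is white precisely when $W^{\#}=0$, i.e. when all six of the displayed entries vanish.

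First I would dispose of case (1). Suppose some diagonal entry is non-zero; by applying a coordinate permutation (which preserves the determinant, hence the notion of whiteness) we may assume $c\ne 0$. From the two adjoint entries $ab-C\overline C=0$ and $ca-B\overline B=0$ we solve $a=B\overline B/c$ and, combined with $bc-A\overline A=0$, get $b=A\overline A/c$. It then remains to check that $W$ equals $c\,\overline v^\top v$ with $v=(B/c,\overline A/c,1)$: expanding $c\,\overline v^\top v$ and using the norm-multiplicativity and the remaining adjoint relations (e.g. $\overline B\,\overline C=aA$, which forces the off-diagonal entries to match) shows the two matrices agree. Conversely, any $c\,\overline v^\top v$ with $v=(x,y,1)$ is white: one computes $\det(c\,\overline v^\top v)=0$ and more directly that its adjoint vanishes, since a ``rank one'' matrix manifestly has zero $2\times 2$ minors; alternatively one notes $(1,0,0\mid 0,0,0)$ is white by the displayed example and the stabilizer of whiteness acts transitively enough (via the $M_x$ of Lemma~\ref{tvpresdet} and coordinate permutations) to move it to any $c\,\overline v^\top v$.

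For case (2), assume $a=b=c=0$. The diagonal adjoint entries now read $A\overline A=B\overline B=C\overline C=0$, and the off-diagonal ones read $\overline B\,\overline C=\overline C\,\overline A=\overline A\,\overline B=0$, which upon applying the bar anti-automorphism become $CB=AC=BA=0$; one then checks these are equivalent to $AB=BC=CA=0$ by using $N(AB)=N(A)N(B)=0$ and the identity $AB+\text{(conjugate terms)}$ — more carefully, since $x\overline x=0$ does not imply $x=0$ in the split octonions, one genuinely needs both the norm conditions and the product conditions, and one must verify the product conditions are symmetric in the stated way (that $AB=0$ together with the norm vanishing gives $BA=0$, say, via $AB+BA=B(A,B)-\dots$, i.e. linearizing $N$). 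Conversely, if $a=b=c=0$ and all these conditions hold, every entry of $W^{\#}$ is visibly zero, so $W$ is white.

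The main obstacle is the conversion between the ``vanishing of products'' conditions: in characteristic $2$, and over the split octonions generally, one cannot divide by $2$ and cannot conclude $x=0$ from $N(x)=0$, so the equivalence $AB=BC=CA=0\iff CB=AC=BA=0$ (given the norm conditions) must be established by a genuine octonion identity rather than a triviality. I expect to handle this by linearizing the norm form and using $B(AB,1)=B(A,\overline B)$ together with $N(A)=N(B)=0$; everything else is routine polarization and bookkeeping with the explicit determinant formula.
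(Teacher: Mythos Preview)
Your overall strategy---expand the linear part of $\det(W+X)$ and set all coefficients to zero---is exactly what the paper does. However, your formula for $W^{\#}$ has the products in the wrong order: the off-diagonal conditions coming from the linear term are
\[
BC=a\overline{A},\qquad CA=b\overline{B},\qquad AB=c\overline{C},
\]
not $\overline{B}\,\overline{C}=aA$ etc. (To see this, the $P$-part of the linear term is $-a\Tr(\overline{A}P)+\Tr(PBC)=\Tr((BC-a\overline{A})P)$, which vanishes for all $P$ iff $BC=a\overline{A}$.) With the correct conditions, case~(2) is immediate: when $a=b=c=0$ the off-diagonal equations \emph{are} $AB=BC=CA=0$, and no ``conversion'' is needed.

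Your ``main obstacle''---showing $CB=AC=BA=0$ together with vanishing norms is equivalent to $AB=BC=CA=0$---is therefore a phantom created by the sign/order slip, and in fact the equivalence you hope for is false in the split octonions. For instance, take $A=e_1$, $B=e_{-0}$, $C=e_0$: all three have norm $0$, and $AB=e_1e_{-0}=0$, $BC=e_{-0}e_0=0$, $CA=e_0e_1=0$, so $(0,0,0\mid A,B,C)$ is white; but $BA=e_{-0}e_1=e_1\neq 0$. So the linearized-norm argument you sketch cannot rescue the wrong-order conditions. A minor additional point: writing the linear term as $\Tr(W^{\#}\circ X)$ presupposes the Jordan product, which the paper deliberately avoids (it is not available in characteristic~$2$); it is cleaner to read the six conditions directly off the linear form, as the paper does.
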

\begin{proof}
Suppose that $W=(a,b,c\mid A,B,C)$ is white, and let $X=(p,q,r\mid P,Q,R)$, so that
the terms in $\det(W+X)$ which are linear in $p,q,r,P,Q,R$ are
\begin{eqnarray}
&&bcp+acq+abr-A\overline{A}p-B\overline{B}q-C\overline{C}r\cr
&&{}-a(P\overline{A}+A\overline{P}) - b(Q\overline{B}+B\overline{Q})
-c(R\overline{C}+C\overline{R})\cr
&&{}+\Tr(PBC +QCA + RAB)
\end{eqnarray}
This can be re-written as
\begin{eqnarray}
&&(bc-A\overline{A})p+(ac-B\overline{B})q+(ab-C\overline{C})r\cr
&&{}+\Tr((BC-a\overline{A})P +(CA- b\overline{B})Q
+(AB-c\overline{C})R)
\end{eqnarray}
For this to be identically zero, it is necessary and sufficient that 
the
following equations be satisfied:
\begin{eqnarray}
bc&=& A\overline{A},\cr
ac&=&B\overline{B},\cr
ab&=&C\overline{C},\cr
BC&=&a\overline{A},\cr
CA&=&b\overline{B},\cr
AB&=&c\overline{C}.
\end{eqnarray}
Now if any of $a,b,c$ is non-zero, say $c\ne 0$, we have
\begin{eqnarray}
b&=&A\overline{A}/c\cr a&=&B\overline{B}/c\cr \overline{C}&=&AB/c,
\end{eqnarray}
 and hence
\begin{eqnarray}
\begin{pmatrix}a & C & \overline{B}\cr \overline{C} & b & A\cr B & \overline{A} & c\end{pmatrix}
&=& \frac1c\begin{pmatrix}\overline{B}\cr A \cr c\end{pmatrix}.\begin{pmatrix}B & \overline{A} & c\end{pmatrix}.
\end{eqnarray}
On the other hand, if $a=b=c=0$, then the equations reduce to
$$A\overline{A}=B\overline{B}=C\overline{C}=AB=BC=CA=0.$$
\end{proof}
\begin{theorem}\label{numwhite}
The number of white  vectors is $(q^9-1)(q^8+q^4+1)$.
\end{theorem}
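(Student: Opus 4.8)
The plan is to count white vectors by splitting according to Lemma~\ref{whitedesc}, i.e.\ into those with some nonzero diagonal entry (case~(1)) and those with $a=b=c=0$ (case~(2)), and then to reconcile the two counts into the single factorised expression $(q^9-1)(q^8+q^4+1)$.

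First I would handle case~(1). By Lemma~\ref{whitedesc}(1), a white vector with $c\neq0$ is uniquely of the form $c\,\overline v^\top v$ with $v=(x,y,1)$, so such vectors are parametrised by $(c,x,y)\in F^\times\times\mathbb O\times\mathbb O$, giving $(q-1)q^{16}$ of them. To avoid double counting I would instead parametrise \emph{white points} with a nonzero diagonal entry. A white point with $c\neq0$ has a unique representative $\overline v^\top v$ with $v=(x,y,1)$, so there are exactly $q^{16}$ such points. More generally, using the coordinate permutations from Section~4 which permute $(a,b,c)$ cyclically and act transitively on the diagonal slots, a white point with \emph{some} nonzero diagonal entry corresponds to a $1$-space spanned by $\overline v^\top v$ for some nonzero $v=(x,y,z)\in F^3$; two such vectors $v,v'$ give the same point iff $v'=\lambda v$ for some $\lambda\in F^\times$. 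Hence the number of white points with a nonzero diagonal entry equals the number of points of $F^3$ on which $v\mapsto\overline v^\top v$ is defined and nonzero — but one must check that $\overline v^\top v$ can have all three diagonal entries zero even for $v\neq0$ (this happens precisely when $x\overline x=y\overline y=z\overline z=0$), so these are genuinely points of $\mathbb P^2(F)$ minus the locus where all diagonal entries vanish. I would count the complementary locus directly.

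Next, case~(2): white vectors with $a=b=c=0$, i.e.\ triples $(A,B,C)\in\mathbb O^3$ with $A\overline A=B\overline B=C\overline C=0$ and $AB=BC=CA=0$. The set $\{A\in\mathbb O: A\overline A=0\}$ is the cone of the nonsingular quadratic form $N$ of $+$-type in dimension $8$, which has $1+(q^{3}+\cdots)=\,$ exactly $(q^3-1)(q^4+... )$ — more precisely the number of nonzero isotropic vectors of $O_8^+(q)$ is $(q^4-1)(q^3+1)$, and including $0$ gives $q^7+q^4-q^3$. The extra conditions $AB=BC=CA=0$ cut this down. Here the key structural input is that the left-multiplication map $L_A:\mathbb O\to\mathbb O$ by an isotropic $A$ has a specific rank (namely $4$, with image a totally isotropic $4$-space, a maximal one), which follows from the formulas for $xe_i$ computed in the proof that $N$ is multiplicative together with the $\Omega_8^+(q)$-transitivity on nonzero isotropic vectors established in Section~4. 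Given $A\neq0$ isotropic, the condition $AB=0$ forces $B$ into a fixed totally singular $4$-space, and among those the further constraint $CA=0$ (and $C$ isotropic, $CB=0$) pins down $C$; carefully tracking the three cases ($A$, $B$, $C$ zero or not) yields the count for case~(2).

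Finally I would add the two contributions and verify the identity. I expect the bookkeeping to collapse: white vectors with a nonzero diagonal entry number $(q-1)\cdot(\text{number of }v\in F^3\setminus\{0\}\text{ modulo scaling, appropriately counted})$, and the generic-rank-one piece contributes the bulk $q^{16}(q-1)\cdot(q+1+\cdots)$. The main obstacle, and where I would spend most of the effort, is case~(2): understanding the variety $\{AB=BC=CA=0,\ A\overline A=B\overline B=C\overline C=0\}$ in $\mathbb O^3$ requires genuine octonion geometry — essentially the fact that the annihilator of a nonzero isotropic octonion under one-sided multiplication is a maximal totally singular subspace, and that the pairwise conditions are not independent. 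Once that is in hand, comparing with the known point counts for $O_8^+(q)$ and $O_{10}^+(q)$ (the latter hinted at by the $\SO_{10}^+(q)$-action in Section~4, which acts on white points of the $10$-dimensional subspace) should make the factor $(q^8+q^4+1)$ appear naturally as $[\,$number of isotropic points of $O_{10}^+(q)\,]$-type data, and the factor $q^9-1$ as the size of a big cell. I would close by remarking that this count is exactly $|{}^{2}E_6|$-related combinatorics, but the proof itself needs only Lemmas~\ref{tvpresdet} and~\ref{whitedesc} and the orthogonal-group facts from Section~4.
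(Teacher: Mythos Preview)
Your overall plan---split via Lemma~\ref{whitedesc} into the case with some nonzero diagonal entry and the case $a=b=c=0$, then count each piece using the orthogonal geometry of the split octonions---is exactly the paper's approach, and your outline of case~(2) (one-sided annihilators of nonzero isotropic octonions are totally singular $4$-spaces, then disjoin on how many of $A,B,C$ vanish) matches what the paper does.

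The genuine gap is in your handling of case~(1). You try to parametrise white points with a nonzero diagonal entry uniformly by nonzero $v=(x,y,z)$ (you write $F^3$, but the context makes clear you mean $\mathbb O^3$) modulo $F^\times$-scaling, via $v\mapsto \overline v^\top v$. This fails on two counts. First, $\overline v^\top v$ is not white for general $v\in\mathbb O^3$: the condition $AB=c\overline C$ would require $(\overline y z)(\overline z x)=N(z)\,\overline y x$, which is false without associativity---for instance with $x=e_1$, $y=e_\omega$, $z=e_{\ombar}$ one gets $AB=e_{\ombar}\ne 0=N(z)\overline y x$. Second, even where the map does land in white vectors, the fibres are not $F^\times$-orbits: $v=(1,0,0)$ and $v'=(u,0,0)$ with $u\overline u=1$ give the same matrix $\overline v^\top v$ without being $F^\times$-multiples. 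So neither the domain nor the equivalence relation is what you claim, and the count cannot proceed along these lines.

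The paper avoids this by never leaving the normalised form of Lemma~\ref{whitedesc}(1). It counts white \emph{vectors} directly, disjointing on whether all three, exactly two, or exactly one of $a,b,c$ are nonzero; in each sub-case one counts triples $(c,x,y)$ with $v=(x,y,1)$ subject to the appropriate (non)vanishing of $N(x)$ and $N(y)$, using that there are $(q-1)(q^7-q^3)$ anisotropic and $q^7+q^4-q^3$ isotropic octonions. This gives three explicit polynomials in $q$; together with the three sub-cases of case~(2) one has six terms which sum to $(q^9-1)(q^8+q^4+1)$. Your inclusion--exclusion instinct on the events $a\ne 0$, $b\ne 0$, $c\ne 0$ would work equally well, but only once you stick to the normalised parametrisation with one coordinate equal to $1$ rather than attempting to projectivise $\mathbb O^3$.
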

\begin{proof}
First suppose Lemma~\ref{whitedesc}(i) holds, and 
disjoin cases according to how many of $a,b,c$ are non-zero.  If
all three of $a,b,c$ are non-zero, then there are $q-1$ choices
for each of $a,b,c$, and $q^7-q^3$ choices for each of $x,y$, making
$$(q-1)^3(q^7-q^3)^2$$ such vectors in all. 
If just two of them are non-zero, say $b$ and $c$,
then there are $q^7-q^3$ choices for $y$ and $q^7+q^4-q^3$ choices for $x$
(any isotropic octonion, or $0$), making $$3(q-1)^2(q^7-q^3)(q^7+q^4-q^3)$$ in all.
If just one of them is
%or just one being 
non-zero, then there are $q^7+q^4-q^3$ choices 
for each of $x,y$, making $$3(q-1)(q^7+q^4-q^3)^2$$ in all.

In the second case of Lemma~\ref{whitedesc}
we disjoin cases according to how many of $A,B,C$ are non-zero.
If all three of $A,B,C$ are non-zero, then there are $$(q^4-1)(q^3+1)=q^7+q^4-q^3-1$$ 
choices for $A$, and the condition $AB=0$
leaves $q^4-1$ choices for $B$. The conditions $BC=0$ and $CA=0$ leave $q^3-1$
choices for $C$, making $$(q^4-1)^2(q^6-1)$$ in total.
Similarly, if just two of $A,B,C$ are non-zero, there are $$3(q^4-1)^2(q^3+1)$$ choices;
and if just one is non-zero, there are $3(q^4-1)(q^3+1)$ choices.
Adding together these six expressions gives the total
$(q^9-1)(q^8+q^4+1)$ as claimed. %on page 168 of \cite{FSG}.
\end{proof}

For clarity, let us define $\mathcal G=SE_6(q)$, that is the group of $\mathbb F_q$-linear
maps which preserve the determinant, and define $G$ to be the group generated
by the matrices $M_x$, their transposes and images under permutations of the
three coordinates. We have shown that $G\le \mathcal G$. It is our aim to
show that $G=\mathcal G$, and deduce the order of the group from this.

It is %now 
a straightforward exercise to show that $G$ %the group 
%generated by the given matrices 
acts transitively
on the set of white %vectors, or the white 
{points}.
%that is the $1$-spaces spanned by white vectors. 
On the other hand, it is obvious from the definition
%we have not shown 
that $\mathcal G$ %group of linear
%transformations which preserve the determinant 
also preserves
this set.
Hence it is sufficient to show that the stabilizer of a white point in $\mathcal G$
is equal to the stabilizer of a white point in $G$.

\begin{theorem}
%\begin{enumerate}
%\item 
The stabilizer in $G$
of a white point is at least a group of shape $q^{16}.C_{q-1}.\mathrm{SO}_{10}^+(q)$, where
in characteristic $2$, we interpret $\SO_{10}^+(q)$ as meaning $\Omega_{10}^+(q)$.
\end{theorem}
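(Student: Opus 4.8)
The plan is to identify a white point, compute its stabilizer inside the explicitly-known subgroups of $G$, and then enlarge it using the root elements $M_x$. I would take the white point to be $P = \langle(0,0,1\mid 0,0,0)\rangle$, corresponding to $v=(0,0,1)$ in Lemma~\ref{whitedesc}(i); this is the natural choice because the coordinate $c$ is singled out in the descriptions of the elementary abelian groups and of the $\SO_{10}^+$-action in Section~4. First I would collect the elements of $G$ already shown to fix $(0,0,1\mid0,0,0)$ up to scalar. The diagonal matrices $\diag(u,\overline u,1)$ with $u\overline u=1$ fix it exactly (they act on $C$ by $C\mapsto\overline uC\overline u$, hence fix $C=0$), and together with reflections-in-vectors-of-arbitrary-norm give the group $C_{q-1}.\SO_8^+(q)$; the last section showed that adjoining $M_x$ and its transpose extends this to $C_{q-1}.\SO_{10}^+(q)$ acting on the $10$-space $(a,b,0\mid0,0,C)$, still preserving the norm $C\overline C-ab$ and still fixing $(0,0,1\mid0,0,0)$ up to scalar — indeed $\diag(u,\overline u,1)$ scales $c$ trivially, while the $\SO_{10}^+$-part fixes $c$ on that $10$-space.

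The remaining factor $q^{16}$ should come from the unipotent root elements that move the white point's coordinate data but fix the point itself. Concretely, from Lemma~\ref{tvpresdet} and the subsequent computation, the group generated by $M_x$ (placed in the $(1,2)$-position) and by the matrix $\begin{pmatrix}1&0&y\cr0&1&0\cr0&0&1\end{pmatrix}$ (the $(1,3)$-position) is elementary abelian of order $q^{16}$; I would check from the displayed formula
$$(a,b+ax\overline x+\overline xC+\overline Cx,\ c+ay\overline y+By+\overline y\overline B\mid A+\overline x\overline B+\overline Cy,\ B+a\overline y,\ C+ax)$$
that when applied to $(0,0,1\mid0,0,0)$ (so $a=b=0$, $C=0$, $B=0$) it returns $(0,0,1\mid0,0,0)$ exactly — every correction term has a factor of $a$, $B$ or $C$. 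Hence this $q^{16}$ lies in the point stabilizer. So does its conjugate by the coordinate permutations, but I only need one copy.

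Finally I would assemble the three pieces into a group of the asserted shape $q^{16}.C_{q-1}.\SO_{10}^+(q)$. The normal structure is clear on general grounds: the $q^{16}$ is the unipotent radical, $C_{q-1}$ the central torus acting by scalars on the relevant coordinates, and $\SO_{10}^+(q)$ (or $\Omega_{10}^+(q)$ in characteristic $2$) the Levi complement; one checks that $C_{q-1}.\SO_{10}^+(q)$ normalizes the $q^{16}$ by observing that it acts on the pair $(x,y)\in\mathbb O\oplus\mathbb O$ preserving the group law, which follows from the explicit actions already computed (the torus scaling $x,y$, the $\SO_8^+$-part acting on the octonion $x$ and fixing $y$, and the extra $M$-generators mixing $x$ with the $a,b$ directions as in the $\SO_{10}^+$-computation). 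The main obstacle I anticipate is the bookkeeping of this last normalization and the verification that the orders multiply rather than collapse — i.e.\ that $q^{16}$, the torus, and $\SO_{10}^+(q)$ genuinely intersect trivially in pairs — but since each piece acts faithfully on a different, explicitly described part of the coordinate data of matrices in $\mathbb J$, this is a finite check rather than a conceptual difficulty. Note the statement only claims ``at least'' this group, so no upper bound is needed here.
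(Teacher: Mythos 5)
There is a genuine gap in the way you assemble the pieces. For your chosen point $\langle(0,0,1\mid 0,0,0)\rangle$ all three families of elements you list do lie in the stabilizer, but they do not fit together in the way you claim. Your copy of $C_{q-1}.\SO_{10}^+(q)$ is, by the construction in Section 4, the subgroup generated by the diagonal matrices together with the $(1,2)$-transvections $M_x$ \emph{and their transposes}; in particular it contains the whole $(1,2)$ root group, of order $q^8$. But that same $(1,2)$ root group is half of the elementary abelian $q^{16}$ you borrowed from Section 4 (the group generated by the $(1,2)$- and $(1,3)$-position transvections). So your ``$q^{16}$'' and your ``$C_{q-1}.\SO_{10}^+(q)$'' share a subgroup of order $q^8$, the assertion that the pieces ``intersect trivially in pairs'' is false, and the normalization claim fails as well: the $(2,1)$-transvections inside your Levi do not normalize the $(1,2)$ root group inside your claimed radical (together they generate the block subgroup acting as an orthogonal group, not a unipotent group). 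Consequently the group you have exhibited is not shown to have order $q^{16}(q-1)|\SO_{10}^+(q)|$ or the stated shape; the $q^{16}$ computed in Section 4 is adapted to a different flag, not to the stabilizer of your point.

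The repair is simple and brings you back to the paper's own argument. For the point $\langle(0,0,1\mid0,0,0)\rangle$ the correct unipotent radical is generated by the transvections in the $(1,3)$ and $(2,3)$ positions, i.e.\ matrices of the shape
$$\begin{pmatrix}1&0&y\cr 0&1&0\cr 0&0&1\end{pmatrix},\qquad
\begin{pmatrix}1&0&0\cr 0&1&x\cr 0&0&1\end{pmatrix},$$
both of which fix $(0,0,1\mid0,0,0)$ exactly (the same coordinate check you performed), and which by the transpose of the Section 4 computation generate an elementary abelian group of order $q^{16}$. The Levi part is then the diagonals together with the $(1,2)$- and $(2,1)$-transvections, giving $C_{q-1}.\SO_{10}^+(q)$ on the $10$-space $(a,b,0\mid0,0,C)$; these two generating sets are disjoint, the Levi visibly normalizes the column-$3$ radical, and intersections are trivial, so the shape $q^{16}{:}C_{q-1}.\SO_{10}^+(q)$ follows. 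This is exactly the paper's decomposition, transported by a coordinate permutation: the paper fixes $(1,0,0\mid0,0,0)$, takes the radical in the $(2,1)$ and $(3,1)$ positions, and the Levi from the diagonal matrices and the $(2,3)$, $(3,2)$ transvections.
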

\begin{proof}
We consider the stabilizer of the white point spanned by $(1,0,0\mid 0,0,0).$
%\begin{enumerate}
%\item 
This is invariant under an elementary abelian 
group of order $q^{16}$ generated by elements of
the shape $$\begin{pmatrix}1&0&0\cr x&1&0\cr 0&0&1\end{pmatrix}\mbox{ and }
\begin{pmatrix}1&0&0\cr 0&1&0\cr y&0&1\end{pmatrix}.$$
%which generate an elementary abelian group of order $q^{16}$. 
Similarly it is invariant
under the action of all diagonal matrices and
$$\begin{pmatrix}1&0&0\cr 0&1&x\cr 0&0&1\end{pmatrix}\mbox{ and }
\begin{pmatrix}1&0&0\cr 0&1&0\cr 0&y&1\end{pmatrix},$$
which together generate $C_{q-1}.\SO_{10}^+(q)$.
Hence
we easily see a subgroup of $G$ 
of shape $q^{16}{:}C_{q-1}.\mathrm{SO}^+_{10}(q)$
fixing this white point.
\end{proof}

Next we show that the suborbits are
the same in both groups.
\begin{lemma}  Given any white point $W$, 
\begin{enumerate}
\item there are exactly $q(q^3+1)(q^8-1)/(q-1)$ white points
$X$ such that all points in $\langle W,X\rangle$ are white.
\item there are exactly $q^8(q^4+1)(q^5-1)/(q-1)$ white points $Y$ such that
$\langle W, Y\rangle$ contains only two white points.
\end{enumerate}
 Moreover, the stabiliser in
$G$ of $W$ acts transitively on the points $X$, and transitively on the points $Y$.
Hence  the permutation actions of $G$ and of $\mathcal G$
 on the white points each have rank $3$, with the given suborbit lengths.
\end{lemma}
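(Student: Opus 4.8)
The plan is to fix the white point $W=\langle(1,0,0\mid0,0,0)\rangle$ and understand the orbits of its stabilizer on the set of white points. Because $G$ --- and indeed already the subgroup $q^{16}{:}C_{q-1}.\SO_{10}^+(q)$ of the previous theorem --- is transitive on white points, and because the relation ``$\langle W,X\rangle$ consists entirely of white points'' is defined purely through $\det$ and so is preserved by both $G$ and $\mathcal G$, it suffices to establish two facts: that the white points $X\ne W$ with $\langle W,X\rangle$ totally white form a single orbit of the stabilizer of $W$ in $G$, of size $q(q^3+1)(q^8-1)/(q-1)$; and that the remaining white points form a single such orbit, necessarily of the complementary size. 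Granting this, the three parts $\{W\}$, the $X$'s and the $Y$'s are at once the orbits of the stabilizer of $W$ in $G$ and in $\mathcal G$ --- they cannot split, being already $G$-orbits, and cannot merge, being separated by collinearity --- so $G$ and $\mathcal G$ both act with rank $3$, with the stated suborbit lengths.

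First I would pin down the collinearity criterion. From the explicit linear part of $\det(Z+Y)$ written out in the proof of Lemma~\ref{whitedesc}, one sees that for white $W$ the line $\langle W,X\rangle$ is totally white exactly when $X$ is white and the bilinear ``cross term'' --- the difference of the linear forms attached to $W+X$ and to $X$ --- vanishes identically; a short computation gives this cross term as $cq+br-\Tr(\overline AP)$ in the coordinates of $Y=(p,q,r\mid P,Q,R)$, where $X=(a,b,c\mid A,B,C)$. Thus the white points collinear with $W$ are precisely the $\langle(a,0,0\mid0,B,C)\rangle$ with $N(B)=N(C)=0$ and $BC=0$; and, as the same computation shows, if $X$ is white but not of this shape then the only white points on $\langle W,X\rangle$ are $W$ and $X$ itself, which is the dichotomy in the statement.

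The count is then a matter of bookkeeping. I would count the vectors $(a,B,C)$ with $N(B)=N(C)=0$, $BC=0$ and $(B,C)\ne0$ by cases on how many of $B,C$ vanish, using that $\mathbb O_F$ has $(q^3+1)(q^4-1)$ non-zero singular vectors and that the left and right annihilators of a non-zero singular octonion are $4$-dimensional totally singular subspaces (standard for split composition algebras, and valid in every characteristic). This gives $(q^3+1)(q^8-1)$ such vectors, hence $q(q^3+1)(q^8-1)/(q-1)$ collinear white points other than $W$; subtracting this number and the point $W$ from the total $(q^9-1)(q^8+q^4+1)/(q-1)$ of Theorem~\ref{numwhite} and simplifying leaves exactly $q^8(q^4+1)(q^5-1)/(q-1)$ for the non-collinear points.

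It remains to prove the two transitivity statements inside the stabilizer of $W$ in $G$, for which I would use the explicit subgroup $q^{16}{:}C_{q-1}.\SO_{10}^+(q)$. On the collinear points: the root elements $\begin{pmatrix}1&0&0\\x&1&0\\0&0&1\end{pmatrix}$ and $\begin{pmatrix}1&0&0\\0&1&0\\y&0&1\end{pmatrix}$ alter a collinear $X=(a,0,0\mid0,B,C)$ only in its entry $a$, which they set to any value in $\FF_q$ provided $(B,C)\ne0$, so it is enough that the Levi factor $C_{q-1}.\SO_{10}^+(q)$ be transitive on the constrained non-zero $(B,C)$; these form the pure spinors in a half-spin module, but the transitivity can be seen directly from the generators $\diag(u,\overline u,1)\colon(B,C)\mapsto(Bu,\overline uC\overline u)$ and the $(2,3)$- and $(3,2)$-root elements, which act by $B\mapsto B+\overline x\overline C$ and $C\mapsto C+\overline By$; since $\{\overline x\overline C:x\in\mathbb O\}$ is the whole $4$-space $\{B':B'C=0\}$ (and symmetrically), one first reaches $(0,C)$ and then a fixed $(0,C_0)$, using that the $\diag(u,\overline u,1)$ act on $C$ as $\SO_8^+(q)$, transitively on non-zero singular vectors. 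On the non-collinear points: given a non-collinear white $Y$ with middle diagonal entry $b$, if $b=0$ then $Y$ has $A\ne0$ (otherwise $Y$ would have the collinear shape), and a $(3,2)$-root element sends $b$ to $b+\Tr(\overline y\overline A)$, which can be made non-zero; once $b\ne0$, the root element $\begin{pmatrix}1&0&0\\x&1&0\\0&0&1\end{pmatrix}$ with $\overline x=-C/b$ kills $C$, whereupon whiteness forces $a=B=0$ and $Y$ lies in the $10$-space $V=\{(0,b,c\mid A,0,0)\}$ (the case $b=0\ne c$ being handled symmetrically and directly); and $\SO_{10}^+(q)$ is transitive on the non-zero white vectors of $V$, which form a hyperbolic quadric and are all non-collinear with $W$. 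The orbit lengths then match the counts. I expect the main difficulty to be exactly this last part: running the octonion algebra through the transitivity arguments --- the annihilator and pure-spinor facts, and the reduction of an arbitrary non-collinear $Y$ into $V$ --- cleanly and uniformly, this being where the non-associativity is least transparent and where characteristics $2$ and $3$ could in principle intrude.
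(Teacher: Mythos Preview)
Your proposal is correct and follows essentially the same approach as the paper: fix $W=\langle(1,0,0\mid0,0,0)\rangle$, identify the collinear white points as those of shape $(a,0,0\mid0,B,C)$, count them via the annihilator structure of the split octonions (exactly the paper's computation $(q^4-1)^2(q^3+1)+2(q^4-1)(q^3+1)=(q^8-1)(q^3+1)$), obtain part~(ii) by subtraction, and deduce transitivity from the exhibited $q^{16}{:}C_{q-1}.\SO_{10}^+(q)$. The paper's own proof is much terser---it asserts the collinear points are those of the given shape without deriving the cross term, and declares transitivity ``immediate''---so your explicit computation of the bilinear cross term $cq+br-\Tr(\overline{A}P)$ and your reduction of a non-collinear $Y$ into the Levi-invariant $10$-space $V=\{(0,b,c\mid A,0,0)\}$ simply supply the details the paper suppresses.
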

\begin{proof}
We may assume that $W$ is spanned by $(1,0,0\mid 0,0,0)$. 
\begin{enumerate}
\item Hence in the first
part we are counting the remaining points spanned by a vector of shape
$(a,0,0\mid 0,B,C)$. As in the
proof of Theorem~\ref{numwhite}, the conditions on $B$ and $C$ result in
the number of solutions for $B$ and $C$ being
$$(q^4-1)^2(q^3+1)+2(q^4-1)(q^3+1)=(q^8-1)(q^3+1).$$
Dividing by $q-1$ for the scalars, and multiplying by $q$ for the choice of $a$,
gives us the result.
\item
Obviously all white points not already counted have the second property.
The number of them is easily computed. 
\end{enumerate}
Transitivity is immediate using the
action of the group $q^{16}{:}C_{q-1}.\mathrm{SO}^+_{10}(q)$ already exhibited.
\end{proof}

% Next we show that the stabilizer of the white point spanned by
%$(1,0,0\mid0,0,0)$ is no bigger than the group exhibited above.

\begin{theorem}
%\item 
The stabilizer in $\mathcal G$
of a white point is at most a group of shape $q^{16}.C_{q-1}.\mathrm{SO}_{10}^+(q)$, where
again we interpret $\SO_{10}^+(q)$ as meaning $\Omega_{10}^+(q)$
in characteristic $2$.
%\end{enumerate}
\end{theorem}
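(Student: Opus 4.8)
The plan is to exploit the quadratic form that occurs as the quadratic part of $\det$ at a white vector. Put $w=(1,0,0\mid 0,0,0)$, so $W=\langle w\rangle$; from the computation above, $\det(w+X)=bc-A\overline{A}+\det(X)$ for $X=(a,b,c\mid A,B,C)$, with $\det(w)=0$ and vanishing linear part because $w$ is white. Thus the quadratic form $q_W(X)=bc-A\overline{A}$ is intrinsic to the pair $(\det,w)$: if $g\in\mathcal G$ stabilises $W$, say $g(w)=\lambda w$, then comparing the parts quadratic in $X$ of the identity $\det(w+X)=\det(gw+gX)$ gives $q_W(gX)=\lambda^{-1}q_W(X)$. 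Hence $\mathcal G_W$ stabilises the $17$-dimensional radical $R=\{(a,0,0\mid 0,B,C)\}$ of $q_W$, which contains $w$, and acts on the $10$-dimensional quotient $\overline{\mathbb J}=\mathbb J/R$ as a group of orthogonal similarities of the induced nondegenerate quadratic form, which is of $+$ type. Writing $\rho\colon\mathcal G_W\to\mathrm{GO}_{10}^+(q)$ for this homomorphism, I would bound $|\ker\rho|$ and $|\im\rho|$ separately.

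For the image: the similarity multiplier of $\rho(g)$ is $\lambda(g)^{-1}$, which depends only on how $g$ acts on the line $W$, so the multiplier map on $\im\rho$ has image of order dividing $q-1$, and the subgroup $\mathcal G_W^{0}$ of elements fixing the vector $w$ maps into $\mathrm{O}_{10}^+(q)$. I would then show this last image lies in $\SO_{10}^+(q)$ — read as $\Omega_{10}^+(q)$ in characteristic $2$ — so that $|\im\rho|\le(q-1)\,|\SO_{10}^+(q)|$. This refinement can be read off from the fact that every element of $\mathcal G$ acts with determinant $1$ on $\mathbb J$, which one verifies from preservation of the cubic form $\det$, together with the flag $\langle w\rangle\subset R\subset\mathbb J$ preserved by $\mathcal G_W^{0}$; alternatively, $\mathcal G_W^{0}$ acts by conjugation on the elementary abelian group of order $q^{16}$ of the previous theorem, which is a half-spin module for its $\SO_{10}^+(q)$-quotient and so admits no action of $\mathrm{O}_{10}^+\setminus\SO_{10}^+$.

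For the kernel: $\ker\rho$ is exactly the set of $\det$-preserving linear maps fixing $w$ and inducing the identity on $\mathbb J/R$. Writing such a $g$ in block-triangular form with respect to $\langle w\rangle\subset R\subset\mathbb J$ and imposing invariance of the cubic form term by term, I expect to find that $g$ must be unipotent, of the shape $\mathrm{id}+\nu$ with $\nu$ carrying $\mathbb J$ into $R$ and annihilating $R$, and that the admissible $\nu$ are precisely those realised by the order-$q^{16}$ group of the previous theorem — generated by the matrices $\begin{pmatrix}1&0&0\\x&1&0\\0&0&1\end{pmatrix}$ and $\begin{pmatrix}1&0&0\\0&1&0\\y&0&1\end{pmatrix}$, which visibly fix $b$, $c$ and $A$ and hence lie in $\ker\rho$. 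Thus $|\ker\rho|=q^{16}$, so $|\mathcal G_W|=|\ker\rho|\,|\im\rho|\le q^{16}(q-1)\,|\SO_{10}^+(q)|$. Together with the previous theorem this determines $|\mathcal G_W|$ exactly, forces $\mathcal G_W$ into the stated shape, and — by transitivity of $G\le\mathcal G$ on white points — yields $\mathcal G=G$ and the group order.

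The step I expect to be the real obstacle is the kernel identification: one must rule out \emph{every} $\det$-preserving map, beyond the obvious $q^{16}$, that fixes $w$ and is trivial modulo $R$ — in particular any semisimple such element, since even a one-dimensional torus inside $\ker\rho$ would wreck the count. The safest route seems to be to establish first that any $g\in\ker\rho$ is unipotent, so that $\langle w\rangle\subset R\subset\mathbb J$ is the only structure it must respect, and then to solve the system of quadratic and linear equations coming from $\det(gX)=\det(X)$; the $\SO$-versus-$\mathrm{O}$ point in the image bound is comparatively minor and is handled by the determinant-one property.
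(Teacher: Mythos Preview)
Your framework matches the paper's: both identify the $17$-space $R=\{(a,0,0\mid 0,B,C)\}$ via the form structure at $w$, pass to the $10$-dimensional quotient with its invariant-up-to-scalar form, and split the problem into bounding the image and the kernel of $\rho$. Where you diverge is in the kernel. The paper does not attempt to solve equations in block-triangular form: instead, having used the known $q^{16}$ to arrange that an element of $\ker\rho$ fixes the white points spanned by $(0,1,0\mid 0,0,0)$ and $(0,0,1\mid 0,0,0)$, it invokes the rank-$3$ adjacency structure on white points (the preceding lemma) to show that the common-neighbour $8$-spaces $\{(0,0,0\mid A,0,0)\}$, $\{(0,0,0\mid 0,B,0)\}$, $\{(0,0,0\mid 0,0,C)\}$ are each setwise fixed, and then bootstraps from fixed pairs of non-adjacent white points to force the element to be trivial. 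This geometric route sidesteps the system of equations you anticipate and makes direct use of machinery already in hand; your algebraic route is workable but heavier, and you are right to flag it as the real obstacle.

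Two smaller points. First, your expectation that $\nu$ annihilates $R$ is not quite right: the generator $\left(\begin{smallmatrix}1&0&0\\x&1&0\\0&0&1\end{smallmatrix}\right)$ of the $q^{16}$ group sends $(a,0,0\mid 0,B,C)\in R$ to $(a+\Tr(Cx),0,0\mid 0,B,C)$, so $\nu(R)\subseteq W$, not $0$; the correct picture is that $g$ is trivial on each successive quotient of the flag $W\subset R\subset\mathbb J$. Second, your two arguments for cutting the image from $\mathrm O$ down to $\SO$ both need care. The claim that every element of $\mathcal G$ has determinant $1$ on $\mathbb J$ is not an immediate consequence of preserving a cubic form (and proving it via $\mathcal G=G$ would be circular here). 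The half-spin argument is sound, but it requires the $q^{16}$ to be normal in $\mathcal G_W$, which you only know \emph{after} identifying it with $\ker\rho$ --- so the kernel step must logically precede the image refinement, not follow it. (The paper is itself terse on this last point.)
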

\begin{proof}
We again consider the stabilizer of the white point spanned by $$(1,0,0\mid 0,0,0).$$
%\begin{enumerate}
%\item 
First note that this stabilizer fixes the $17$-space
of matrices of the form $$(a,0,0\mid 0,B,C).$$ Hence it acts on the $10$-dimensional
quotient space. Now the trilinear form obtained by polarizing the determinant
induces a bilinear form on this quotient, by substituting the original white vector
as the first variable. This bilinear form is invariant up to scalar multiplication, and therefore
the action of the point stabilizer on the $10$-dimensional quotient can be no
bigger than already given. In particular, any element of the kernel of
this action maps $(0,1,0\mid 0,0,0)$ to a matrix of the form $(0,1,0\mid0,0,C)$,
and maps $(0,0,1\mid0,0,0)$ to $(0,0,1\mid0,B,0)$.

But we already have a group of order $q^{16}$ permuting these pairs of matrices regularly,
so we may assume that the two white points spanned by $(0,1,0\mid0,0,0)$ 
and $(0,0,1\mid0,0,0)$ are fixed. Now the white points which are adjacent to both
of these span the $8$-space $\{(0,0,0\mid A,0,0)\}$, so this $8$-space is fixed.
Similarly the $8$-spaces $\{(0,0,0\mid 0,B,0)\}$ and $\{(0,0,0\mid0,0,C)\}$. As the white
points are just the isotropic vectors in these $8$-spaces, the action on any one of them
can be no more than the orthogonal group already exhibited.

Hence we may assume that our element of the kernel acts trivially on one:
say on the $(0,0,0\mid A,0,0)$.
Now we have a large number of pairs of non-adjacent white points which are fixed,
and for every one of these pairs, the $8$-space of white points which are adjacent to
both is also fixed. This is enough to show that the kernel of the action
is no bigger than the group already exhibited.
%\end{enumerate}
\end{proof}

As a consequence, we now have: 
%now proved the formula for the order of $SE_6(q)$.
\begin{corollary}
$$|SE_6(q)| = q^{36}(q^{12}-1)(q^9-1)(q^8-1)(q^6-1)(q^5-1)(q^2-1).$$
\end{corollary}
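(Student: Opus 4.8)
The plan is to compute $|SE_6(q)|$ by the orbit--stabilizer theorem applied to the action of $\mathcal G = SE_6(q)$ on the set of white vectors, combined with the preceding three theorems which together pin down the white-point stabilizer exactly. First I would recall that the number of white vectors is $(q^9-1)(q^8+q^4+1)$ by Theorem~\ref{numwhite}, so the number of white \emph{points} is $(q^9-1)(q^8+q^4+1)/(q-1) = (q^8+q^4+1)(q^9-1)/(q-1)$. Since $\mathcal G$ preserves the set of white points and $G \le \mathcal G$ already acts transitively on this set, $\mathcal G$ is transitive on white points as well. The sandwich given by the two ``stabilizer'' theorems shows that the stabilizer in $\mathcal G$ of a white point both contains and is contained in a group of shape $q^{16}.C_{q-1}.\mathrm{SO}_{10}^+(q)$; hence the stabilizer is exactly that group (and, since the orbit and stabilizer of $G$ and $\mathcal G$ coincide, we simultaneously obtain $G = \mathcal G$, as promised in the text).

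Next I would assemble the order of the point stabilizer $H = q^{16}.C_{q-1}.\mathrm{SO}_{10}^+(q)$. Here $|C_{q-1}| = q-1$, the unipotent part contributes $q^{16}$, and $|\mathrm{SO}_{10}^+(q)|$ (equivalently $|\Omega_{10}^+(q)|$ in characteristic $2$, where the relevant factor structure differs but the order is the same) is the standard value
\begin{equation}
|\mathrm{SO}_{10}^+(q)| = q^{20}(q^5-1)(q^4-1)(q^3-1)(q^2-1)(q-1).
\end{equation}
Multiplying gives $|H| = q^{16} \cdot (q-1) \cdot q^{20}(q^5-1)(q^4-1)(q^3-1)(q^2-1)(q-1) = q^{36}(q-1)^2(q^5-1)(q^4-1)(q^3-1)(q^2-1)$.

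Then I would multiply $|H|$ by the number of white points:
\begin{equation}
|SE_6(q)| = |H| \cdot \frac{(q^8+q^4+1)(q^9-1)}{q-1}.
\end{equation}
Now $q^8+q^4+1 = (q^4+q^2+1)(q^4-q^2+1)$ and $q^4+q^2+1 = (q^2+q+1)(q^2-q+1)$; also $q^6-1 = (q^3-1)(q^3+1) = (q-1)(q^2+q+1)(q+1)(q^2-q+1)$ and $q^{12}-1 = (q^6-1)(q^6+1) = (q^6-1)(q^2+1)(q^4-q^2+1)$. The remaining task is the purely formal one of verifying that
\begin{equation}
q^{36}(q-1)(q^5-1)(q^4-1)(q^3-1)(q^2-1)\,(q^8+q^4+1)(q^9-1) = q^{36}(q^{12}-1)(q^9-1)(q^8-1)(q^6-1)(q^5-1)(q^2-1),
\end{equation}
i.e.\ that $(q-1)(q^4-1)(q^3-1)(q^8+q^4+1) = (q^{12}-1)(q^8-1)(q^6-1)/\bigl((q^3-1)(q^2-1)\bigr)$ holds identically in $q$, which is checked by factoring both sides into cyclotomic polynomials. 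This cyclotomic bookkeeping is the only real computation, and it is entirely mechanical; the genuine mathematical content has already been done in the three theorems that determine the stabilizer exactly, so I anticipate no obstacle here beyond care in the algebra.
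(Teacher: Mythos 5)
Your strategy is exactly the one the paper intends for this corollary: transitivity of $\mathcal G$ on white points (inherited from $G$), the two stabilizer theorems sandwiching the point stabilizer to be exactly of shape $q^{16}.C_{q-1}.\mathrm{SO}_{10}^+(q)$ (with $\Omega_{10}^+(q)$ in characteristic $2$), and then orbit--stabilizer against the count $(q^9-1)(q^8+q^4+1)/(q-1)$ of white points from Theorem~\ref{numwhite}. So the architecture is sound and matches the paper.

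However, the computation as written fails at a concrete step: the order you quote for $\mathrm{SO}_{10}^+(q)$ is wrong. You use $q^{20}(q^5-1)(q^4-1)(q^3-1)(q^2-1)(q-1)$, a polynomial of degree $35$; the correct order is $q^{20}(q^8-1)(q^6-1)(q^5-1)(q^4-1)(q^2-1)$, of degree $45=\dim \mathrm{SO}_{10}$ (what you wrote down is in fact the order of a maximal parabolic subgroup $q^{10}{:}\mathrm{GL}_5(q)$ of $\mathrm{SO}_{10}^+(q)$, not of the whole group; the same generic formula also gives $|\Omega_{10}^+(q)|$ for $q$ even, so your characteristic-$2$ remark is fine once the formula is corrected). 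Because of this, the polynomial identity you defer as ``mechanical cyclotomic bookkeeping'' is simply false: its left-hand side has degree $68$ while the right-hand side has degree $78$, so the check you promise cannot succeed and the proof as written does not establish the corollary. With the correct order, the stabilizer has order $q^{36}(q-1)(q^8-1)(q^6-1)(q^5-1)(q^4-1)(q^2-1)$ (degree $62$, consistent with $62+16=78=\dim E_6$), and the final verification collapses to the single identity $(q^4-1)(q^8+q^4+1)=q^{12}-1$, after which the stated formula for $|SE_6(q)|$ drops out exactly as in the paper.
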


Define $E_6(q)$ to be the quotient of $SE_6(q)$ by any scalars it contains. Note
that a scalar $\lambda$ is in $SE_6(q)$ if and only if $\det(\lambda X)=\det(X)$
for all $X$, that is if and only if $\lambda^3=1$. Hence $SE_6(q)$ is a triple cover
of $E_6(q)$ if $q\equiv 1\bmod 3$, and $SE_6(q)\cong E_6(q)$ otherwise.
To prove that $E_6(q)$ is simple, we use Iwasawa's Lemma:
\begin{lemma}
If $G$ is a perfect, primitive permutation group, and the point stabiliser has a normal
abelian subgroup whose $G$-conjugates generate $G$, then $G$ is simple.
\end{lemma}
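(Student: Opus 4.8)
The plan is to prove this purely group-theoretically, as the classical Iwasawa criterion; none of the Albert-algebra machinery enters. Write $G_\alpha$ for the stabiliser of a point $\alpha$ of the permutation domain $\Omega$, and let $A \trianglelefteq G_\alpha$ be the given normal abelian subgroup, so that $\langle A^g : g \in G\rangle = G$. Let $N \trianglelefteq G$ be an arbitrary nontrivial normal subgroup. The entire task is to show $N = G$: once every nontrivial normal subgroup is forced to equal $G$, the group has no proper nontrivial normal subgroup and is therefore simple.

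First I would extract two consequences of primitivity. Since $G$ is primitive it is in particular transitive, and $G_\alpha$ is a \emph{maximal} subgroup of $G$. Faithfulness of a permutation group means the kernel $\bigcap_g G_\alpha^{\,g} = 1$; hence a nontrivial normal subgroup cannot lie inside $G_\alpha$, for if $N \le G_\alpha$ then $N \le G_\alpha^{\,g} = G_{g\alpha}$ for every $g$, placing $N$ in the trivial kernel. So $N \not\le G_\alpha$. As $N$ is normal, $NG_\alpha$ is a subgroup strictly containing the maximal subgroup $G_\alpha$, and therefore $NG_\alpha = G$.

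The key step is to upgrade this to $NA = G$. Given $g \in G$, factor $g = nh$ with $n \in N$ and $h \in G_\alpha$, using $G = NG_\alpha$. For $a \in A$ I would verify $n^{-1}an \in aN$: indeed $n^{-1}ana^{-1} = n^{-1}(ana^{-1})$ is a product of two elements of $N$ by normality of $N$, so it lies in $N$, giving $n^{-1}an \in Na = aN$. Hence $n^{-1}An \subseteq AN = NA$. Conjugating further by $h \in G_\alpha$ fixes $N$ (normal in $G$) and fixes $A$ (since $A \trianglelefteq G_\alpha$), so $A^g = h^{-1}(n^{-1}An)h \subseteq h^{-1}(NA)h = NA$. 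Thus every conjugate of $A$ is contained in the subgroup $NA$; as those conjugates generate $G$ by hypothesis, I conclude $G = NA$.

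Finally I would invoke perfectness. From $G = NA$ with $A$ abelian, the quotient $G/N \cong A/(A \cap N)$ is abelian, so $G' \le N$; but $G$ perfect gives $G' = G$, forcing $N = G$, as required. The main obstacle—indeed the only step needing any care—is the third paragraph, the proof that $A^g \le NA$ for every $g$: this is precisely where the three hypotheses (normality of $N$, the factorisation $G = NG_\alpha$ coming from primitivity, and the $G_\alpha$-normality of $A$) must be combined in the right order. The surrounding reductions and the final perfectness argument are routine.
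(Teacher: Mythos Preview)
Your proof is correct and is the standard argument for Iwasawa's Lemma. The paper does not actually supply a proof of this lemma: it is merely stated, by name, as a known result and then applied to the action of $E_6(q)$ on white points. So there is nothing to compare against; you have filled in what the paper leaves to the reader, and every step (primitivity forcing $NG_\alpha=G$, the containment $A^g\le NA$, and the perfectness endgame) is handled cleanly.
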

Now consider the action of $E_6(q)$ on the white points. This action is
obviously primitive and faithful. Now $SE_6(q)$ is generated by the conjugates of
$M_x$, which lies in an abelian normal subgroup of the stabilizer 
$q^{16}.C_{q-1}.\SO_{10}^+(q)$ of a point.
In particular, $M_x$ lies in the derived group, so the group is perfect.
Hence, by Iwasawa's Lemma, $E_6(q)$ is simple.

\section{Building the building} %Parabolic subgroups}
The classification of white vectors above allows us to classify the subspaces
which consist entirely of white vectors. 
\begin{theorem}
If $W$ is a subspace of $\mathbb J$ consisting entirely of white vectors (and $0$),
then $W$ is taken by an element of $SE_6(q)$ to one of the following:
\begin{eqnarray}
W_1&=&\langle (1,0,0\mid 0,0,0)\rangle\cr
W_2&=&\langle W_1, (0,0,0\mid 0,e_{-1},0)\rangle\cr
W_3&=&\langle W_2, (0,0,0\mid 0,e_{\ombar},0)\rangle\cr
W_4&=&\langle W_3, (0,0,0\mid 0,e_{\omega},0)\rangle\cr
W_5&=&\langle W_4,  (0,0,0\mid 0,e_0,0)\rangle\cr
W_5'&=&\langle W_4, (0,0,0\mid 0,e_{-0},0)\rangle\cr
W_6&=&\langle W_5', 
(0,0,0\mid 0,0,e_{-1})\rangle
\end{eqnarray}
\end{theorem}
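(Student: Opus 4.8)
The plan is to classify totally white subspaces by induction on dimension, using Lemma~\ref{whitedesc} and the transitivity of $SE_6(q)$ on white points and (as established in the preceding lemmas) on the suborbits. First I would record what it means for a white subspace to contain a given white vector: if $W$ contains a white vector $X$ with $\langle W_1,X\rangle$ totally white, then by the suborbit analysis we may move $X$ into the $8$-space $\{(0,0,0\mid A,0,0)\}$ of white points adjacent to $(1,0,0\mid0,0,0)$, since that $8$-space is exactly the first suborbit. So after applying an element of $SE_6(q)$ fixing $W_1$, any totally white subspace $W$ properly containing $W_1$ meets $\{(0,0,0\mid A,0,0)\}$ nontrivially.

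The core of the argument is then internal to that $8$-space: by Lemma~\ref{whitedesc}(ii), a vector $(0,0,0\mid A,0,0)$ is automatically white (the conditions $B=C=0$ force $AB=BC=CA=0$ and $B\overline B=C\overline C=0$, while $A\overline A=0$ is the isotropy condition), so the white vectors in this $8$-space are precisely the isotropic vectors for the octonion norm $N(A)=A\overline A$, which is a nondegenerate quadratic form of $+$-type in dimension $8$. Thus totally white subspaces of this $8$-space are exactly totally isotropic subspaces of $\mathbb O_F$, and these are classified by the theory of the orthogonal group $\Omega_8^+(q)$: the maximal ones have dimension $4$ and fall into two families under $\Omega_8^+(q)$, interchanged by the full orthogonal group. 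I would pick standard representatives, namely $\langle e_{-1},e_{\ombar},e_{\omega},e_0\rangle$ and $\langle e_{-1},e_{\ombar},e_{\omega},e_{-0}\rangle$ — note $N(e_i)=0$ for all $i\ne\pm0$ while $e_0,e_{-0}$ are the two nonisotropic-pairing idempotents with $B(e_0,e_{-0})=1$, so $e_0$ and $e_{-0}$ cannot both appear — and verify that the flag $W_2\subset W_3\subset W_4\subset W_5$, $W_2\subset W_3\subset W_4\subset W_5'$ realizes exactly these. The action of the point stabilizer $q^{16}{:}C_{q-1}.\SO_{10}^+(q)$ contains, via the diagonal matrices, a group inducing $\SO_8^+(q)$ on this $8$-space (as shown in Section on elements of $E_6(q)$), giving transitivity on totally isotropic subspaces of each fixed dimension within each $\Omega_8^+$-family.

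It remains to handle white subspaces that are \emph{not} contained in $W_1^{\perp}$-type configurations, i.e.\ to see that every totally white $W$ of dimension up to $6$ can be conjugated into this standard flag, and that nothing of dimension $7$ or more exists. For this I would argue that if $W$ is totally white of dimension $\ge 2$, then after fixing a white point $W_1\subset W$ and using the above we may assume $W\cap\{(0,0,0\mid A,0,0)\}$ is one of the listed totally isotropic spaces; the remaining issue is whether $W$ can also stick out of the span $W_1\oplus\{(0,0,0\mid A,0,0)\}$, and here one checks directly, using the six white-point equations $bc=A\overline A$, $ac=B\overline B$, $ab=C\overline C$, $BC=a\overline A$, $CA=b\overline B$, $AB=c\overline C$, that any white vector $(a,b,c\mid A,B,C)$ lying in a white subspace together with $(1,0,0\mid0,0,0)$ and all of $W_4=\{(0,0,0\mid A,0,0):A\in\langle e_{-1},e_{\ombar},e_{\omega}\rangle\}$ must have $b=c=0$, $B=C=0$, and $A$ in the perp of $\langle e_{-1},e_{\ombar},e_{\omega}\rangle$ — which is $\langle e_{-1},e_{\ombar},e_{\omega},e_0\rangle$ or its $e_{-0}$-analogue — forcing the two branches $W_5$, $W_5'$, after which $W_6$ is the unique (up to the stabilizer) one-step extension of $W_5'$, and no totally white $7$-space exists because a $6$-dimensional totally isotropic-type configuration already saturates the geometry. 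The main obstacle I anticipate is this last bookkeeping step: verifying that the two maximal families genuinely do not extend and that the "twist'' producing $W_6$ from $W_5'$ (adding $(0,0,0\mid 0,0,e_{-1})$, which uses the third coordinate block rather than the first) is forced and unique up to the group — this is the point where the interplay between the octonion multiplication constraints $AB=BC=CA=0$ and the orthogonal geometry has to be pinned down carefully rather than quoted.
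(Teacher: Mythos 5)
There is a genuine error at the foundation of your reduction. You claim that the white points $X$ with $\langle W_1,X\rangle$ totally white are exactly the (isotropic) points of the $8$-space $\{(0,0,0\mid A,0,0)\}$. This is false: for $A\ne 0$ isotropic, the line spanned by $(1,0,0\mid 0,0,0)$ and $(0,0,0\mid A,0,0)$ contains non-white vectors, because the white condition $BC=a\overline{A}$ fails for $(\lambda,0,0\mid\mu A,0,0)$ with $\lambda\mu\ne 0$; such points lie in the \emph{second} suborbit. (The $8$-space $\{(0,0,0\mid A,0,0)\}$ is the set of white points adjacent to \emph{both} $(0,1,0\mid0,0,0)$ and $(0,0,1\mid0,0,0)$, which is where you may have picked it up.) The white points actually adjacent to $W_1$ are those spanned by vectors of shape $(a,0,0\mid 0,B,C)$ with $B\overline{B}=C\overline{C}=0$ and $BC=0$; their number is $q(q^3+1)(q^8-1)/(q-1)$, as in the rank-$3$ lemma you cite, which is strictly larger than the $(q^3+1)(q^4-1)/(q-1)$ isotropic points of a single $8$-space — a count that by itself refutes the identification. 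A symptom of the same confusion is that your flag sits in the $A$-slot, whereas the theorem's $W_2,\dots,W_5'$ use the $B$-slot and $W_6$ additionally the $C$-slot.

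Because of this, the core of your argument — classifying totally white subspaces through $W_1$ as totally isotropic subspaces of one $\mathrm{O}_8^+$-geometry — cannot be completed. The correct local structure modulo $W_1$ is a \emph{pair} of octonion coordinates $(B,C)$ subject to isotropy and the annihilator condition $BC=0$, and it is precisely this two-block structure that produces two inequivalent maximal types: $W_5$ (dimension $5$, with $B$ ranging over a totally isotropic $4$-space containing $e_0$ and $C=0$) and $W_6$ (dimension $6$, with $B$ ranging over the $e_{-0}$-type $4$-space and $C\in\langle e_{-1}\rangle$). Your final bookkeeping claim — that compatibility with $W_1$ and the $4$-dimensional member of the flag forces the remaining octonion coordinates to vanish — is false precisely at this point, and would wrongly conclude that all maximal totally white subspaces have dimension $5$. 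The paper's proof instead observes that every vector spanning a white line with $(1,0,0\mid0,0,0)$ has shape $(a,0,0\mid 0,B,C)$, reduces modulo $W_1$ to the pairs $(B,C)$ with $B$ in a totally isotropic subspace of $\mathbb O$ and $C$ in its annihilator, uses the $\Omega_8^+(q)$-action to normalise the span of the $B$'s to one of four standard totally isotropic subspaces (distinguishing the $e_0$ and $e_{-0}$ branches), and then finishes by transitivity on white $4$-spaces together with the maximality of $W_5$ and $W_6$. If you rebuild your induction on that description of the first suborbit, the rest of your outline (transitivity via the stabiliser $q^{16}{:}C_{q-1}.\SO_{10}^+(q)$, uniqueness of the extension to $W_6$) can be salvaged, but as written the argument proves the wrong statement.
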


\begin{proof}
First observe that there is a $6$-space consisting entirely of white vectors,
spanned by $$(1,0,0\mid0,0,0), (0,0,0\mid0,0,e_{-1}),(0,0,0\mid 0,B,0),$$ 
where $B\in\langle e_{-1},e_{\ombar},e_{\omega},e_{-0}\rangle,$ which is obviously maximal.
Moreover, the root elements already given act as transvections on this $6$-space $W_6$,
and generate a group which acts on it as $\SL_6(q)$.
Therefore it suffices to prove that every pure white subspace is contained in an image
under the group of $W_5$ or $W_6$.

We have already shown that there is
a unique orbit of the group on pure white $1$-spaces and $2$-spaces, 
so we may take the latter
to be spanned by $(1,0,0\mid0,0,0)$ and $(0,0,0\mid 0,e_{-1},0)$. 
Now all vectors which together with $(1,0,0\mid0,0,0)$ span a pure white $2$-space
are of the form $(a,0,0\mid 0,B,C)$. Therefore
our space
contains white vectors of shape $(0,0,0\mid 0,B,C)$,
where $B$ lies in some totally isotropic subspace of the octonions, which,
using the action of the orthogonal group, may be
taken to be one of 
$$\langle e_{-1},e_{\ombar}\rangle, \quad \langle e_{-1},e_{\ombar},
e_{\omega}\rangle,\quad
\langle e_{-1},e_{\ombar},e_{\omega},e_{0}\rangle,\quad 
\langle e_{-1},e_{\ombar},e_{\omega},e_{-0}\rangle.$$ Then $C$ lies
in the corresponding annihilator $\langle e_{-1},e_{\ombar}\rangle$ (in the first case) or
$\langle e_{-1}\rangle$ (in the second and last cases) or $0$ (in the third case).
All of these contain at least a $4$-space in common with $W_6$,
and by transitivity on these $4$-spaces, we see that there
is just one more
orbit on  maximal white subspaces, with representative the $5$-space
spanned by $(1,0,0\mid0,0,0)$ and $(0,0,0\mid0,B,0)$ with 
$B\in\langle e_{-1},e_{\ombar},e_{\omega},e_0\rangle$.
Since any pure white $4$-space is contained in a unique pure white $6$-space,
the result follows.\end{proof}

%This leaves us with five different white spaces whose stabilizers we should
%investigate. Let us call these $W_1$, $W_2$, $W_3$, $W_5$ and $W_6$,
%with the subscript denoting the dimension, as in (4.137) of \cite{FSG}.
%Our representatives are slightly different from \cite{FSG}, however:
%we take $(1,0,0\mid0,0,0)$ to span $W_1$, extend to $W_2$ by adjoining
%$(0,0,0\mid0,e_{-1},0)$ and then to $W_3$ by adjoining $(0,0,0\mid0,e_{\ombar},0)$.
%To obtain $W_5$ from $W_3$, adjoin $(0,0,0\mid0,e_{\omega},0)$ and
%$(0,0,0\mid0,e_0,0)$, and to obtain $W_6$ from $W_3$ adjoin
%$(0,0,0\mid0,e_{\omega},0)$, $(0,0,0\mid0,e_{-0},0)$ and $(0,0,0\mid0,0,e_{-1})$.
By adjoining appropriate root groups to the subgroup of
$2^2.\POmega_8^+(q).S_3$ which fixes $W_i$ or $W_5'$, it is easy to obtain
generators for the stabilizers. For $i=1,2,3,5,6$, these turn out to be five of the six maximal
parabolic subgroups. The other maximal parabolic subgroup fixes the
$10$-space $W_{10}$ defined by adjoining to $W_5$ the $5$-space
spanned by $(0,1,0\mid0,B,0)$ with $B\in\langle e_{-0},e_{-\omega},e_{-\ombar},e_1\rangle$.
In other words, $$W_{10}=\{(a,0,0\mid 0,B,C)\}.$$
Notice that $W_{10}$ has a quadratic form defined on it, which is invariant
up to scalar multiplication. With respect to this form, the white points are
isotropic, while the non-isotropic points are grey.

%** Description of the maximal parabolics.

\section{Duality and the subgroup $F_4(q)$}%, and primitive idempotents}
%The map $X\mapsto X^\top$ on $\mathbb J$ induces a duality map
%The outer automorphism of $E_6(q)$ is best taken to be the map
%$M\mapsto (\overline{M}^\top)^{-1}$
%on the given generators for $SE_6(q)$,
%and hence induces an outer automorphism of the group. 
%This map does not preserve the determinant, of course, since it maps
%$\Tr(ABC)$ to $\Tr(ACB)$.
There is a second, `dual', action of $SE_6(q)$ on the set $\mathbb J$
of $3\times 3$
octonion Hermitian matrices, whereby a matrix $M$ acts as
$$M:X\mapsto M^{-1}X(\overline{M}^\top)^{-1}.$$
To see that this is an action, we need to show that any relation between the
original actions of $M$ by
$$M:X\mapsto \overline{M}^\top X M$$
also holds for $(\overline{M}^\top)^{-1}$. But by symmetry, any word in
the original (right-)actions of the $M$ corresponds to the reverse word in
 the (left-)actions of the corresponding $\overline{M}^\top$. In particular,
given any relator satisfied by the actions of matrices $M_i$, the reverse relator
is satisfied by the corresponding $\overline{M_i}^\top$.
Hence the original relator is satisfied by the $(\overline{M_i}^\top)^{-1}$.

This implies that the map $M\mapsto (\overline{M}^\top)^{-1}$ on the
given generators of $SE_6(q)$ induces an automorphism of $SE_6(q)$.
It is easy to see that it is not inner, so we shall call it \emph{duality}.

Now if $M$ is a generator of $SE_6(q)$ fixed by this duality automorphism,
then $M=(\overline{M}^\top)^{-1}$, so $\overline{M}^\top IM=I$.
In other words, $M$ fixes the identity element of $\mathbb J$, so
$M$ lies in $F_4(q)$.

%Now we know that $M\in F_4(q)$ if and only if $M$ fixes the identity element,
%that is if and only if $\overline{M}^\top M=I$, and this happens if and only if
%$M$ is fixed by the duality map.
%This centralizes the standard copy of $F_4(q)$
%which is generated by matrices $M$ satisfying $\overline{M}^\top M=I$.
%\section{Generators for $F_4(q)$}
%As generators for $F_4(F)$, 
For example, the diagonal elements $\diag(u,\overline{u},1)$
with $u\overline{u}=1$ satisfy this condition.
%described above. The root elements for $E_6$ do not preserve the trace, but products
So do the elements
$$\begin{pmatrix}1&x&0\cr-\overline{x}&1&0\cr0&0&1\end{pmatrix},$$
provided $x\overline{x}=0$. These elements are in $SE_6(q)$ because
$$\begin{pmatrix}1&x\cr-\overline{x}&1\end{pmatrix}=\begin{pmatrix}1&x\cr 0&1\end{pmatrix}
\begin{pmatrix}1&0\cr -\overline{x} & 1\end{pmatrix}.$$
Hence by putting $x=\lambda e_i$ we obtain $8$ root
groups, becoming $24$ when we allow coordinate permutations as well.
(The other $24$ root groups lie in the subgroup generated by the diagonal matrices.)
%of certain pair of them do:
%these are  the so-called {\em short root elements}
%for $F_4(q)$ :
%$$\pmatrix{1&\lambda e_0&0\cr-\lambda e_{-0}&1&0\cr0&0&1}=
%\pmatrix{1&\lambda e_0&0\cr0&1&0\cr0&0&1}\pmatrix{1&0&0\cr-\lambda e_{-0}&1&0\cr0&0&1}$$
%for arbitrary $\lambda\in F$, which 
The case $x=\lambda e_0$ realises precisely the short root element displayed in
(4.105) of \cite{FSG}. 
%The other short root elements can be obtained 
%from this by replacing
%the pair $e_0,-e_{-0}$ of off-diagonal elements by $e_i,-e_i$ for $i=\pm1,\pm\omega,\pm\ombar$,
%and by $e_{-0},-e_0$,
%and by permuting the coordinates.
%In particular, since we have proved that these root elements in $E_6(q)$ preserve
%the determinant, the same is true of the short root elements in $F_4(q)$.
 %It is easy to check that they also preserve the trace.
%For example, a short calculation shows that the matrix displayed above maps
%$(a,b,c\mid  A,B,C)$ to
%$$(a-\lambda (C e_{-0}+e_0\overline{C}), b+\lambda(e_{-0} C+\overline{C}e_0),c\mid
%A+\lambda e_{-0} \overline{B}, B-\lambda\overline{A}e_{-0}, C +\lambda(a-b)e_0 -\lambda^2
%e_0\overline{C}e_0).$$
%Then it is easy to see that the trace is preserved.

 As noted in \cite{FSG},
the normalizer of a maximal torus can be found inside a subgroup of shape
$2^2.\POmega_8^+(q).S_3$.
If we take the diagonal elements $\diag(u,\overline{u},1)$ 
and $\diag(1,u,\overline{u})$ with $u=\lambda e_{-i}+\lambda^{-1}e_i,$
%$\lambda x_2+\lambda^{-1} x_7$, $\lambda x_3+\lambda^{-1} x_6$ and
%$\lambda x_4+\lambda^{-1} x_5$, 
and adjoin the coordinate permutations,
then we obtain the normalizer of a maximal split torus.

The long root elements also lie in  $2^2.\POmega_8^+(q)$. For example we may take
the product of the three group elements given by
\begin{eqnarray}
&&\diag(1+e_{-1},1-e_{-1},1),\cr
&& \diag(1-\lambda e_{\ombar},1+\lambda e_{\ombar},1), \cr
&&\diag(1-e_{-1}+\lambda e_{\ombar},1+e_{-1}-\lambda e_{\ombar},1)
\end{eqnarray} 
to give the long root element
displayed in (4.104) of \cite{FSG}.

%In characteristic $2$, the usual definitions are more complicated, and are not
%given in \cite{FSG}. 
%In particular, the Jordan multiplication is doubled to $X\circ Y=XY+YX$, except for
%the product $X\circ X=XX$ which remains but is denoted $X^2$ to avoid confusion.
%Similarly, the inner product $\Tr(X\circ Y)$ now becomes the
%associated symmetric bilinear form of a quadratic form $\Tr(X^2)$.
%However, the given
%matrices and their actions still make sense in characteristic $2$, so an alternative
%is to define $F_4(q)$ as the group generated by the actions of these matrices.

%\section{Primitive idempotents}
%The treatment of primitive idempotents given in \cite{FSG} can be simplified
%by the observation that every primitive idempotent can be written as
%$$\pmatrix{\overline{x}\cr\overline{y}\cr\overline{z}}
%\pmatrix{x&y&z}=\pmatrix{\overline{x}x &\overline{x}y&\overline{x}z\cr
%\overline{y}x &\overline{y}y&\overline{y}z\cr
%\overline{z}x &\overline{z}y&\overline{z}z} = (x\overline{x},
%y\overline{y},z\overline{z}\mid \overline{y}z,\overline{z}x,\overline{x}y)
%,$$
%that is as $\overline{v}^\top v$ where $v$ is a suitable row `vector' of three octonions.
%The trace of  $\overline{v}^\top v$ has an obvious interpretation as the norm of $v$,
%so the condition that the trace of a primitive idempotent is $1$ translates into saying
%that the norm of $v$ is $1$.
One way to compute the order of $F_4(q)$ is to count the primitive idempotents.
%These are a particular type of white vector, and 
The official definition in terms of the Jordan algebra is that they are
idempotents $X$ (in the sense that $X\circ X=X$) with trace $1$.
However, this definition does not necessarily
work in characteristic $2$ or $3$, and they may alternatively 
be defined as white vectors with trace $1$, so that it is not necessary to treat these
characteristics differently.
\begin{definition}
An element of $\mathbb J$ is called a \emph{primitive idempotent}
if it is a white vector with trace $1$.
\end{definition}
A straightforward calculation shows that there are precisely
$$q^8(q^8+q^4+1)$$ primitive idempotents, 
and $$(q^8-1)(q^8+q^4+1)=(q^{12}-1)(q^4+1)$$ white vectors of trace $0$. More precisely, 
the trace can be non-zero only in the
first three of the six cases in the proof of Theorem~\ref{numwhite}.
The number of choices of $a,b,c$ which give trace $1$ is $q^2-3q+3$
if all are non-zero, and $3(q-2)$ if two are non-zero, and $3$ if just one is non-zero.
Hence the total number of primitive idempotents is
$$(q^2-3q+3)(q^7-q^3)^2
+3(q-2)(q^7-q^3)(q^7+q^4-q^3)
+3(q^7+q^4-q^3)^2$$
which simplifies to $q^8(q^8+q^4+1)$.
%Adding these together gives the required number. 
Subtracting $q-1$ times this
from the total number of white vectors gives the number with trace $0$.

It is now clear that $F_4(q)$ acts transitively on the primitive idempotents.
To calculate the group order we need only calculate the order of the stabilizer
of one of the primitive idempotents. 
We already know that
the stabilizer in $SE_6(q)$ of a white point is a group of shape
$q^{16}.C_{q-1}.\SO_{10}^+(q)$, so we just need to calculate the subgroup
of this which preserves the identity element of $\mathbb J$. 
In the case of a trace $1$ white point, such as $(1,0,0\mid 0,0,0)$,
this is easily seen
to be a subgroup $\Spin_9(q)$.
%, for $q$ odd, the stabilizer
%of $(1,0,0\mid0,0,0)$ contains $2.\POmega_9(q).2$, acting as the orthogonal group
%on the $9$-space of matrices $(0,b,-b\mid A,0,0)$. Moreover,
%the stabilizer fixes this $9$-space, and it is clear that the action cannot be any
%bigger than the orthogonal group. It remains therefore to show that the kernel
%of the action has order $2$, generated by $(a,b,c\mid A,B,C)\mapsto (a,b,c\mid A,-B,-C)$.
%Now every primitive idempotent of the form
%$(1,b,-b\mid \overline{y},x,\overline{x}y)$ is fixed, so its eigenspaces are invariant.
%The intersections of these eigenspaces with the space of $(0,0,0\mid0,B,C)$ show
%that any element of the kernel acts on this $16$-space over $F$ as a scalar, and 
%therefore as $\pm1$.
%
%Essentially the same argument works in characteristic $2$. 
In particular the
formula for the group order is, independently of the characteristic,
% the same in characteristic $2$ as in odd characteristic, namely
$$|F_4(q)|=q^{24}(q^{12}-1)(q^8-1)(q^6-1)(q^2-1).$$

Note also that the stabilizer of a white point of trace $0$ is a group of shape
$$q^{7+8}.C_{q-1}.\SO_7(q),$$ which is one of the maximal parabolic
subgroups of $F_4(q)$. To prove simplicity of $F_4(q)$, we can apply Iwasawa's Lemma
to the action on the white points of trace $0$. Details are given
in Section 4.8.7 of \cite{FSG}.

\section{The compact real form of $E_6$}
We constructed the finite groups $E_6(q)$ by analogy with the
split real form of $E_6$, which is defined in terms of the exceptional
Jordan algebra over the split octonions. In a similar way, we shall
construct the finite groups ${}^2E_6(q)$ by analogy with the
compact real form of $E_6$. But we have seen that
%The compact real form of $E_6$ can 
in order to describe the latter, it is not sufficient just to replace the split
octonions by the compact octonions. Instead,
%be obtained by the following
%procedure. 
we must first extend the scalars from $\mathbb R$ to $\mathbb C$ to obtain
the complexification $SE_6(\mathbb C)$. 
Then we compactify by decreeing that a certain Hermitian form be
invariant. 
%In the interest of precision, let
%$\PL(7)=\mathbb F_7\cup\{\infty\}=\{\infty,0,1,\ldots,6\}$
Now, just as in the construction of the unitary groups, there is some
choice as to which Hermitian form to use.
It is not obvious a priori which (if any) is the `best'.

%Here are the details. 
First notice that, since there is only one isomorphism type of
complex octonion algebra, we can take either the basis $\{e_i\mid i\in\pm I\}$
defined above for the `split' real octonions, 
or the  basis $\{1=i_\infty,i_0,i_1,\ldots,i_6\}$ for the `compact' real octonions,
or Cayley numbers. We can switch
between the two by a base change such as the following:
\begin{eqnarray}
1&=&e_0+e_{-0},\cr
i_1&=&e_{\omega}+e_{-\omega},\cr
i_2&=&e_{\ombar}+e_{-\ombar},\cr
i_4&=&e_{-1}+e_1,\cr
ji_3&=& e_0-e_{-0},\cr
ji_0&=&e_{\omega}-e_{-\omega},\cr
ji_5&=&e_{\ombar}-e_{-\ombar},\cr
ji_6&=&e_{-1}-e_1,
\end{eqnarray}
where $j$ denotes $\sqrt{-1}$ in the scalar copy of $\mathbb C$.

Now there is an obvious Hermitian form $h$ on the complex octonions 
obtained by defining
the basis $\{e_i\}$ to be orthonormal.
There is another obvious Hermitian form $h_2$ defined by saying that the basis $\{i_t\}$ is
orthonormal. We show next that $h=2h_2$.

Let us write $x'$ for the complex conjugate $a-bj$ of $x=a+bj$.
Now there are two different ways we might want to extend $'$ to the octonions.
Given an octonion
$$A=\sum_t \alpha_t i_t = \sum_i \beta_i e_i$$
we define
\begin{eqnarray}
A^*&=&\sum_t \alpha_t' i_t\cr
A'&=&\sum_i \beta_i' e_i
\end{eqnarray}
Since $e_i'=e_{-i}$ we have
$$A^*=\sum_i \beta_i' e_{-i}$$
and since $i_t^*=i_t$ for $t=\infty,1,2,4$ and $i_t^*=-i_t$ otherwise, we have
$$A'= \sum_{t=\infty,1,2,4}\alpha_t' i_t - \sum_{t=0,3,5,6}\alpha_t' i_t.$$  
Then we can compute
$$h_2(A)=\sum_t \alpha_t\alpha_t' = (A\overline{A}^*+A^*\overline{A})/2
=(\overline{A}A^*+\overline{A}^*A)/2$$ 
and therefore
$$h(A)=\sum_i \beta_i\beta_i' = A\overline{A}^*+A^*\overline{A}
=\overline{A}A^*+\overline{A}^*A.$$

So now define an Hermitian form $H$ on
$\mathbb J_{\mathbb C}= \mathbb C^{27}$
by $$H(a,b,c\mid A,B,C) = aa'+bb'+cc'+h(A)+h(B)+h(C).$$
%by taking the standard orthonormal basis 
%of $\mathbb O$, and decreeing that
Thus $(1,0,0\mid 0,0,0)$, $(0,0,0\mid e_i,0,0)$ and rotations of these form an
orthonormal basis. Then the subgroup of $SE_6(\mathbb C)$ which
preserves this Hermitian form is in fact the compact real form of $SE_6$.
If we prefer to use the basis $\{i_t\}$ for the octonions,
then we have
$$H(a,b,c\mid A,B,C) = aa'+bb'+cc'+h_2(A)+h_2(B)+h_2(C)+
h_2(\overline{A})+h_2(\overline{B})+h_2(\overline{C}).$$
This shows that $H$ is the `natural' Hermitian form induced
on $\mathbb J_{\mathbb C}$ by the `natural' Hermitian form
$h_2$
on the complex octonions.

%If we now extend $'$ to the octonions by defining
%\begin{eqnarray}
%\left(\sum_t \lambda_t i_t\right)' &=& \sum_t \lambda_t' i_t,
%\end{eqnarray}
%then
%\begin{eqnarray}
%h_2(A)=h_2(\overline{A})&=&\Re(A\overline{A}')\cr
%&=&(A\overline{A}'+A'\overline{A})/2\cr
%&=&(\overline{A}A'+\overline{A}'A)/2
%\end{eqnarray}
%and therefore
%$$h(A)=A\overline{A}'+A'\overline{A}=\overline{A}A'+\overline{A}'A.$$
We now extend $*$ also to $\mathbb J_{\mathbb C}$ by
defining
$$X^*=(a',b',c'\mid A^*,B^*,C^*)$$
for $X=(a,b,c\mid A,B,C)$, and then we find that
\begin{eqnarray}
\Tr(X\circ X^*) &=& aa'+bb'+cc'+h(A)+h(B)+h(C)\cr
&=& H(X).
\end{eqnarray}
%For future reference, note also that on the split basis $\{e_i\mid i\in\pm I\}$
%we have $e_i'=e_{-i}$ for all $i$, and therefore
%$$\left(\sum_i \lambda_ie_i\right)'=\sum_i \lambda_i'e_{-i}.$$
 
Now to generate the group we first collect the original generators for
$F_4$, which were diagonal matrices $\diag(u,\overline{u},1)$ with
$u\overline{u}=1$, together with $2\times 2$ matrices acting on two of the
three coordinates as
$\begin{pmatrix}\alpha & \beta \cr -\beta & \alpha\end{pmatrix}$ or
$\begin{pmatrix}\alpha & \beta i_t\cr \beta i_t & \alpha\end{pmatrix}$, for $\alpha^2+\beta^2=1$
and $t\ne\infty.$
Now we can adjoin two further real dimensions of diagonal matrices
by taking $\diag(\alpha,\beta,\gamma)$ with $\alpha,\beta,\gamma\in\mathbb C$
satisfying $\alpha\alpha'=\beta\beta'=\gamma\gamma'=\alpha\beta\gamma=1$.
Then we take the $2\times 2$ matrices
$\begin{pmatrix}\alpha & \beta \cr -\beta' & \alpha'\end{pmatrix}$ and
$\begin{pmatrix}\alpha & \beta i_t\cr \beta' i_t & \alpha'\end{pmatrix}$, for $\alpha\alpha'+\beta\beta'=1$
and $t\ne\infty.$
This gives us the full dimension $78$ for $E_6$ made up of $52$ for $F_4$,
an extra $2$ for diagonal matrices, and $24$ for the $2\times 2$ matrices.

To see that these matrices belong to $E_6(\mathbb C)$, observe that all the matrix
entries lie in $\mathbb C(i_t)$, and that the determinant is $1$. Hence they are
products of the `transvection' generators for $E_6(\mathbb C)$. To see that they also
preserve the Hermitian form $H$, we do some explicit calculations.
We first prove a small lemma. 
\begin{lemma}
If $x,y,z \in \mathbb O$, then
\begin{enumerate}
\item $x(yx)=\Tr(yx).x - (x\overline{x}).\overline{y}$;
\item $\Tr((xy)(z\overline{x}))=x\overline{x}\Tr(yz)$.
\end{enumerate}
\end{lemma}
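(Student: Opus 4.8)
The plan is to reduce both identities to calculations with the bilinear (norm) form and the trace, exploiting the facts already established: $\overline{xy}=\overline{y}\,\overline{x}$, $\Tr(x)=x+\overline{x}$, multiplicativity of $N$, and the Moufang/alternative laws proved in the earlier lemma (in particular $x(xy)=(xx)y$, $(yx)x=y(xx)$, $(xy)x=x(yx)$). Since both sides of (i) and (ii) are at most quadratic in $x$ and linear in the remaining variables, it suffices to verify them on the basis $\{e_i\mid i\in\pm I\}$, but I expect the slicker route is to avoid coordinates entirely.

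For part (i), the key observation is the linearisation trick. The alternative law gives $x(xy)=(x\overline x+\Tr(x)\,\text{stuff})\dots$ — more precisely, writing $x\overline{x}=N(x)$ (a scalar) and using $\overline{x}=\Tr(x)-x$, one computes $x(\overline x\, \overline y)$ via $x(\overline x\,\overline y)=(x\overline x)\overline y=N(x)\overline y$ from the alternative law $x(xz)=(xx)z$ applied with $z=\overline{y}$ after replacing one factor $x$ by $\overline{x}$ using $x\overline{x}=\overline{x}x=N(x)$. Then I would write $yx = \Tr(yx) - \overline{yx} = \Tr(yx) - \overline{x}\,\overline{y}$, so that
\[
x(yx) = x\bigl(\Tr(yx)\cdot 1 - \overline{x}\,\overline{y}\bigr) = \Tr(yx)\,x - x(\overline{x}\,\overline{y}) = \Tr(yx)\,x - N(x)\,\overline{y},
\]
which is exactly (i). The only thing to check carefully is the step $x(\overline{x}\,\overline{y}) = (x\overline{x})\overline{y}$; this follows from the alternative law $u(uv)=(uu)v$ by polarising in $u$ (replacing $u\mapsto x,\ u\mapsto 1$ and using linearity), or directly from $x\overline{x}$ being central, together with the identity $x(\overline{x}w)+\overline{x}(xw)=(x\overline{x}+\overline{x}x)w=2N(x)w$ combined with $x\overline{x}=\overline{x}x$; I will spell out whichever is cleanest.

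For part (ii), I would first reduce to (i). Note $\Tr\bigl((xy)(z\overline{x})\bigr)$: using cyclicity of the trace of a product (established earlier, $\Tr(uvw)$ is invariant under cyclic permutation and bracketing), rewrite this as $\Tr\bigl(\overline{x}\,(xy)\,z\bigr)$ after a cyclic shift, but one must be careful since $\Tr$ is only bracketing-independent for products of three octonions — here we have a product of what looks like two octonions $(xy)$ and $(z\overline{x})$, so instead I would expand $(xy)(z\overline{x})$ and take traces. Alternatively, and more robustly: apply the trace form $B$ and write $\Tr(uv)=B(\overline{u},v)+\Tr(u)\Tr(v)-\dots$; cleaner still is to use $\Tr(pq)=b$-pairing and the identity $\Tr((xy)(z\overline{x})) = \Tr(\overline{x}(xy)z)$ which does hold because $\Tr$ of a genuine three-fold product is bracket-free, after writing $(xy)(z\overline{x})$ and noting $\Tr(PQ)=\Tr(QP)$ with $P=(xy),\ Q=(z\overline{x})$ then $\Tr(Q P)=\Tr((z\overline{x})(xy))$ and using $(z\overline{x})(xy)$... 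Let me instead reduce via part (i): by cyclicity $\Tr((xy)(z\overline{x}))=\Tr(\overline{x}(xy)z)$ is not yet legitimate, so I will expand $\Tr((xy)(z\overline{x}))=\Tr((xy)z\cdot\overline{x})$ using bracket-freeness of the triple trace with the three factors $xy$, $z$, $\overline{x}$ — wait, that regroups a 4-fold product, which needs justification. The honest approach: set $w=xy$ and compute $w(z\overline{x})$, then note $\Tr(w(z\overline{x})) = \Tr((z\overline x)w)=\Tr(z(\overline x w))$ by triple-trace bracket-freeness (three factors $z$, $\overline{x}$, $w$), then $\overline{x}w=\overline{x}(xy)=N(x)y$ by the alternative law again, giving $\Tr(z\cdot N(x)y)=N(x)\Tr(zy)=N(x)\Tr(yz)=x\overline{x}\,\Tr(yz)$, which is (ii).

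The main obstacle is bookkeeping with the non-associativity: I must be scrupulous that every regrouping of a product of three octonions inside a trace is justified by the earlier lemma ($\Tr$ of a triple product is bracket- and cyclic-invariant), and that every regrouping of a genuine product (not inside a trace) is justified by the alternative laws $x(xy)=(xx)y$ etc., which only apply when two of the three factors are equal (or complex-conjugate, which reduces to the same thing since $x$ and $\overline x$ generate an associative — indeed commutative-up-to-trace — subalgebra). The fallback, if any regrouping proves delicate, is the brute-force basis check: both identities are multilinear of low degree, so substituting $x=e_i$, $y=e_j$, $z=e_k$ and invoking the multiplication table of Definition 1 (and the already-tabulated values of $\Tr(e_ie_je_k)$) finishes it, though less elegantly.

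\medskip
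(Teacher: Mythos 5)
Your argument is correct and is essentially the paper's own proof: for (i) you write $yx=\Tr(yx)-\overline{x}\,\overline{y}$ and use $x(\overline{x}\,w)=(x\overline{x})w$ (justified via $\overline{x}=\Tr(x)-x$ and the alternative law), exactly as the paper does; for (ii) you use $\Tr(PQ)=\Tr(QP)$ together with the bracket-independence of the triple trace and an alternative law, the only (immaterial) difference being that the paper regroups as $\Tr(((z\overline{x})x)y)$ with the right alternative law, while you regroup as $\Tr(z(\overline{x}(xy)))$ with the left one.
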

\begin{proof}
\begin{enumerate}
\item $x(yx-\Tr(yx))=-x(\overline{yx})=-x(\overline{x}.\overline{y})=-(x\overline{x})
\overline{y}$.
\item $\Tr((xy)(z\overline{x}))=\Tr((z\overline{x})(xy))=\Tr(((z\overline{x})x)y)
=\Tr((z(\overline{x}x))y)=x\overline{x}\Tr(yz)$.
\end{enumerate}
\end{proof}

Now we are mapping by a matrix of the form
$$\begin{pmatrix}x&y&0\cr-\overline{y}^* & x' & 0\cr 0&0&1\end{pmatrix}$$
where $x$ is real and $y$ is a multiple of $i_t$ for some $t$.
By explicit computation we see that the action of 
this matrix on $X=(a,b,c\mid A,B,C)$ is given by
\begin{eqnarray}
a&\mapsto&ax^2-x\Tr(C\overline{y}^*)+by^*\overline{y}^*\cr 
b&\mapsto&b(x')^2+x'\Tr(\overline{C}y)+ay\overline{y},\cr
c&\mapsto &c,\cr
A&\mapsto&x^*A+\overline{y}\overline{B},\cr
B&\mapsto& Bx-\overline{A}\overline{y}^*,\cr
C&\mapsto&xx^*C+axy-bx^*y^*-y^*\overline{C}y.
\end{eqnarray}
Then we can compute the new value of $H$ term by term as follows.
First consider the terms in $A$ and $B$.
\begin{eqnarray}
\Tr(A\overline{A}^*)&\mapsto &
xx'\Tr(A\overline{A}^*) + x\Tr(A^*By) + x'\Tr(AB^*y^*)\cr&&\qquad
+\Tr((\overline{y}\overline{B})(B^*y^*))\cr
\Tr(B\overline{B}^*)&\mapsto& xx'\Tr(B\overline{B}^*)
-x\Tr(ByA^*) - x^*\Tr(B^*y^*A)\cr&&\qquad
+\Tr((\overline{A}\overline{y}^*)(yA^*))
\end{eqnarray}
and by applying the lemma and using the fact that $xx^*+y\overline{y}^*=1$
we see that the sum of these two terms is preserved. Now $cc'$ is fixed, and the
other terms are as follows. (Note that all calculations are in the (complex)
quaternion subalgebra generated by $C$ and $y$, so we can use associativity.)
\begin{eqnarray}
aa'&\mapsto&x^2x'^2aa' + x^2y\overline{y}ab'-x^2x'\Tr(\overline{C}^*y)a\cr
&&+x'^2y^*\overline{y}^*a'b + y\overline{y}y^*\overline{y}^*bb'
-y^*\overline{y}^*x'\Tr(\overline{C}^*y)b\cr
&&-xx'^2\Tr(C\overline{y}^*)a' -y\overline{y}x\Tr(C\overline{y}^*)b' +xx'\Tr(C\overline{y}^*)\Tr(\overline{C}^*y)\cr
bb'&\mapsto&y\overline{y}y^*\overline{y}^*aa' + y\overline{y}x^2ab' 
+ y\overline{y}x\Tr(C^*\overline{y}^*)a\cr
&&+x'^2y^*\overline{y}^*a'b + (xx')^2bb' + xx'^2\Tr(C^*\overline{y}^*)b\cr
&&+x'y^*\overline{y}^*\Tr(\overline{C}y)a' + x^2x'\Tr(\overline{C}y)b'
+ xx'\Tr(\overline{C}y)\Tr(C^*\overline{y}^*)\cr
\Tr(C\overline{C}^*)&\mapsto& \Tr(
xx'y\overline{y}^*aa' -x^2y\overline{y}ab' + x^2x'ya\overline{C}^* - xy\overline{y}^*C^*
\overline{y}a\cr
&&-x'^2y^*\overline{y}^*a'b + xx'y^*\overline{y}
bb' -xx'^2y^*\overline{C}^*b + x'y^*\overline{y}^*C^*\overline{y}b\cr
&&+xx'^2C\overline{y}^*a' - x^2x'C\overline{y}b' + (xx')^2C\overline{C}^*
-xx'C\overline{y}^*C^*\overline{y}\cr
&&-y^*\overline{C}y\overline{y}^*x'a' + y^*\overline{C}yx\overline{y}b'
-y^*\overline{C}yxx'\overline{C}^* + y^*\overline{C}y\overline{y}^*C^*\overline{y})
\end{eqnarray}
Adding these together, and collecting like terms we find the coefficient of $aa'$ is
\begin{eqnarray}
(xx')^2+y\overline{y}y^*\overline{y}^* + xx'y\overline{y}^*
+xx'\overline{y}y^*&=&
(xx'+y\overline{y}^*)(xx'+\overline{y}y^*)\cr
&=&1
\end{eqnarray}
and similarly, so is the coefficient of $bb'$.
Next, the coefficient of $ab'$ is
$$2x^2y\overline{y}-\Tr(x^2y\overline{y})=0,$$
while the coefficient of $a$ is
\begin{eqnarray}
&& -x^2x'\Tr(\overline{C}^*y) + xy\overline{y}\Tr(\overline{C}^*y^*)
+\Tr(x^2x'y\overline{C}^*)-\Tr(xy\overline{y}^*C^*\overline{y})\cr
&=& x^2x'\Tr(\overline{C}^*y-y\overline{C}^*)
+ xy\overline{y}\Tr(\overline{C}^*y^*-C^*\overline{y}^*)\cr
&=&0,
\end{eqnarray}
and the coefficient of $b$ is
$$-y^*\overline{y}^*x'\Tr(\overline{C}^*y) + xx'^2\Tr(\overline{C}^*y^*)
-xx'^2\Tr(y^*\overline{C}^*)+x'\Tr(y^*\overline{y}C^*\overline{y}^*)=0.$$
The remaining terms are as follows:
\begin{eqnarray}
&&(xx')^2\Tr(C\overline{C}^*) + \Tr(y^*\overline{C}y\overline{y}^*C^*\overline{y})\cr
&&+xx'(\Tr(C\overline{y}^*)\Tr(\overline{C}^*y) + \Tr(\overline{C}y)\Tr(C^*\overline{y}^*)
%-\Tr(C\overline{y}^*C^*\overline{y}) 
- 2\Tr(C^*\overline{y}C\overline{y}^*))
\end{eqnarray}
Using the lemma, we have
\begin{eqnarray}
\Tr(C\overline{y}^*)\overline{y}&=&\overline{y}C\overline{y}^* + \overline{y}y^*\overline{C}\cr
\Rightarrow \Tr(C\overline{y}^*)\Tr(\overline{y}C^*)&=&\Tr(\overline{y}C\overline{y}^*C^*)
+\overline{y}y^*\Tr(\overline{C}C^*)\cr
\Tr(\overline{C}y)y^*&=&y^*\overline{C}y+y^*\overline{y}C\cr
\Rightarrow \Tr(\overline{C}y)\Tr(y^*\overline{C}^*)&=&\Tr(y^*\overline{C}y\overline{C}^*)
+y^*\overline{y}\Tr(C\overline{C}^*)
\end{eqnarray}
and hence this expression
 reduces to $\Tr(C\overline{C}^*)$. 
This concludes the proof that the given matrices lie in the compact real form of $E_6$.
%\begin{eqnarray}
%aa'&\mapsto&aa'-a\Tr(C'\overline{x}) -a'\Tr(C\overline{x}')
%+\Tr(C'\overline{x})\Tr(C\overline{x}')\cr
%bb'&\mapsto& bb'+b\Tr(\overline{C}'x')+b'\Tr(\overline{C}x)
%+\Tr(\overline{C}x)\Tr(\overline{C}'x')\cr
%\Tr(C\overline{C}')&\mapsto& \Tr(C\overline{C}') + 
%\Tr(a'C\overline{x}'+ax\overline{C}' -b'C\overline{x}-bx'\overline{C}'\cr&&\qquad-C(\overline{x}'C'\overline{x})-C'(\overline{x}C\overline{x}'))
%\end{eqnarray}
%and using the lemma
%we see that all cross terms cancel out, as required.

It is not hard to see that
the given generators $M$ satisfy the relation $$(\overline{M}')^\top M=I,$$
which may also be expressed by saying that they centralize the twisted
duality map induced by
$$M\mapsto ((\overline{M}')^\top)^{-1},$$
which is the product of complex conjugation with the ordinary duality map
whose centralizer is $F_4$.

%Writing $'$ for complex conjugation, since $\overline{\phantom{x}}$ is already
%in use for octonion conjugation, the Hermitian form may be defined as
%$$\Tr(X\circ X') = aa'+bb'+cc'+\Tr(A\overline{A}'+B\overline{B}'+C\overline{C}'),$$
%where $X=(a,b,c\mid A,B,C)$ and $'$ applied to elements of $\mathbb O$ and
%$\mathbb J$ is understood to apply to the coefficients with respect
%to the given basis.
%There are various ways to define ${}^2E_6(q)$. Here we describe two.
%The first is derived from the definition as the centralizer of a certain
%automorphism of $E_6(q^2)$, the second as an analogue of the
%compact real form of $E_6$.

\section{Aschbacher's construction of ${}^2E_6(q)$.}
Aschbacher defines ${}^2E_6(q)$ just in terms of Dickson's cubic form,
by defining the natural basis to be orthonormal for the Hermitian form.
He does not mention the Jordan algebra or octonions at all.
Of course this is equivalent to taking the space 
$\mathbb J$ over the field of order $q^2$, and
defining the Hermitian form so that the vectors
$(1,0,0\mid 0,0,0)$, $(0,0,0\mid e_i,0,0)$ and rotations form an orthonormal basis.

For fields of odd characteristic, we can just mimic everything we did for the
compact real form. However, in characteristic $2$ we have the usual problem that
the octonions are not spanned by the vectors $i_t$. It is necessary therefore
to change basis to the $e_i$ before reducing modulo $2$.
With this small change, we obtain generators for all the finite groups ${}^2E_6(q)$
in all characteristics.
%In particular, the generators given above are not sufficient to generate
%the group in characteristic $2$. (*** Or are they?)

\section{Another real form of $E_6$, and an alternative construction of ${}^2E_6(q)$.}

There is another Hermitian form one might want to use on the octonions, namely $h_1$
defined by
$$h_1(A)=\sum_i \beta_i\beta_{-i}' = A\overline{A}'+A'\overline{A}
=\overline{A}A'+\overline{A}'A.$$ 
This induces the Hermitian form $H_1$ on the Albert algebra, where
$$H_1(a,b,c\mid A,B,C) = aa'+bb'+cc'+h_1(A)+h_1(B)+h_1(C).$$
This Hermitian form is not positive definite, so it defines a non-compact real form of $E_6$,
in fact the form called $E_{6(2)}$. But on reducing modulo $p$, we again obtain
the finite groups ${}^2E_6(q)$. For certain purposes, this basis seems to be more useful
than the one Aschbacher uses. Since this construction does not (so far as I am aware) appear
explicitly in the literature, we give some more details here.

%\subsection{The centralizer of a twisted duality automorphism.}
The group ${}^2E_6(q)$ is usually defined as the subgroup of $E_6(q^2)$
consisting of those elements which commute with the automorphism which
is the product of the automorphism given above with the field automorphism
$x\mapsto x^q$ on all coefficients.
To generate ${}^2E_6(q)$, therefore, 
we first need to take the Albert algebra over $\FF_{q^2}$.
 Let $\mathbb J=\mathbb J_F$,
where $F=\mathbb F_{q^2}$. Denote by $'$ the automorphism
of $\mathbb O_F$ induced by the Frobenius automorphism $\lambda\mapsto \lambda^q$
of $F$ of order $2$, that is, if $x=\sum_{i\in\pm I} \lambda_ie_i$ then
$x'=\sum_{i\in \pm I} \lambda_i^q e_i$.
Then there is a twisted duality map $*$ on $\mathbb J$ defined by
$$X^* = X'^\top=\overline{X}'.$$
This induces the above-mentioned automorphism of the group, which acts on
the generators $M$ by $M\mapsto (\overline{M}'^\top)^{-1}$.
For $M$ to centralize this automorphism, therefore, we must have
$\overline{M}'^\top M=I$.

There is a notion of \emph{twisted Jordan algebra}, in which there is a
new product $*$ defined in terms of the ordinary Jordan product $X\circ Y$
by $$X*Y = (X\circ Y)'.$$

Here we shall define the group in a slightly different way,
as hinted above. Let $H_1$ be the Hermitian form
defined on $\mathbb J_F$ by
$$H_1(a,b,c\mid A,B,C) = aa'+bb'+cc'+\Tr(A\overline{A}'+B\overline{B}'+C\overline{C}').$$
Then the simply-connected group ${}^2SE_6(q)$ is the subgroup of
$SE_6(q^2)$ which preserves $H_1$.
As long as the characteristic is not $2$, this Hermitian form may be described
in terms of the Jordan algebra as $H(X)=\Tr(X\circ X')$.  %***Check this.

In order to produce generators for ${}^2SE_6(q)$,
we consider matrices $M$ which satisfy $M^\dagger M=I$, where
$M^\dagger$ is defined by applying the field automorphism $x\mapsto x^q$ to
every coefficient in $\overline{M}^\top$. 
For example, if $x=\lambda e_i$ for some $i\in\pm I$, then the matrix
$$N_x=\begin{pmatrix}1&x&0\cr -\overline{x}' & 1& 0\cr 0&0&1\end{pmatrix}$$
is such a matrix. Since $e_i\overline{e_i}=0$ we have $x\overline{x}'=0$, so
$$\begin{pmatrix}1&x\cr -\overline{x}' & 1\end{pmatrix} =
\begin{pmatrix}1&x\cr 0&1\end{pmatrix}\begin{pmatrix}1&0\cr -\overline{x}'&1\end{pmatrix}$$
and therefore the given matrix $N_x$ lies in $SE_6(q^2)$. To check that it preserves $H_1$,
we first prove a small lemma.
\begin{lemma}
If $x\overline{x}=0$, and $y,z \in \mathbb O_F$, then
\begin{enumerate}
\item $x(yx)=x\Tr(yx)$;
\item $\Tr((xy)(z\overline{x}))=0$.
\end{enumerate}
\end{lemma}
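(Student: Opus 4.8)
The plan is to recognise this as the degenerate case $x\overline{x}=0$ of the analogous lemma used earlier for the compact real form, which asserted $x(yx)=\Tr(yx).x-(x\overline{x}).\overline{y}$ and $\Tr((xy)(z\overline{x}))=x\overline{x}\Tr(yz)$. As that lemma was phrased for a different octonion algebra, I would simply re-run its (entirely field-independent) proof here. The only ingredients needed are the relations $\overline{u}=\Tr(u)-u$ and $\overline{uv}=\overline{v}.\overline{u}$, cyclicity $\Tr(uv)=\Tr(vu)$, the identity $\Tr(u(vw))=\Tr((uv)w)$ established earlier, and the two alternative laws $x(xu)=(xx)u$ and $(ux)x=u(xx)$ noted in the proof sketch of the Moufang law.

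For part~(i), since $yx-\Tr(yx)=-\overline{yx}=-\,\overline{x}.\overline{y}$, I would write $x(yx)-\Tr(yx).x=x\bigl(yx-\Tr(yx)\bigr)=-x(\overline{x}.\overline{y})$, reassociate this as $-(x\overline{x}).\overline{y}$, and conclude from $x\overline{x}=0$ that $x(yx)=\Tr(yx).x=x.\Tr(yx)$ (as $\Tr(yx)$ is a scalar). For part~(ii), I would compute $\Tr\bigl((xy)(z\overline{x})\bigr)=\Tr\bigl((z\overline{x})(xy)\bigr)=\Tr\bigl(((z\overline{x})x)y\bigr)=\Tr\bigl((z(\overline{x}x))y\bigr)$, using cyclicity, then $\Tr(u(vw))=\Tr((uv)w)$, then a reassociation; since $\overline{x}x=(\Tr(x)-x)x=\Tr(x).x-x^{2}=x\overline{x}=0$, the whole expression vanishes.

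The only point requiring genuine care — everything else is bookkeeping with the trace identities — is the justification of the two reassociations $x(\overline{x}.\overline{y})=(x\overline{x}).\overline{y}$ and $(z\overline{x})x=z(\overline{x}x)$, which are \emph{not} instances of general associativity in $\mathbb O_F$. Substituting $\overline{x}=\Tr(x)-x$ reduces the first to the left alternative law $x(xu)=(xx)u$ (with $u=\overline{y}$) together with a scalar multiplication by $\Tr(x)$, and the second to the right alternative law $(ux)x=u(xx)$ (with $u=z$) together with a scalar multiplication; alternatively, one observes that in each case all three factors lie in the subalgebra generated by two elements, which is associative. I expect no further obstacle.
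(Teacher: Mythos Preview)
Your proof is correct and follows essentially the same route as the paper's: both parts are proved by the identical chain of equalities, using $\overline{u}=\Tr(u)-u$, $\overline{uv}=\overline{v}.\overline{u}$, cyclicity and reassociability of the trace, and the alternative laws. Your explicit reduction of the two reassociations $x(\overline{x}.\overline{y})=(x\overline{x}).\overline{y}$ and $(z\overline{x})x=z(\overline{x}x)$ to the alternative laws via $\overline{x}=\Tr(x)-x$ is a point the paper passes over in silence, so your version is if anything more complete.
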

\begin{proof}
\begin{enumerate}
\item $x(yx-\Tr(yx))=x(\overline{yx})=x(\overline{x}.\overline{y})=(x\overline{x})
\overline{y}=0$.
\item $\Tr((xy)(z\overline{x}))=\Tr((z\overline{x})(xy))=\Tr(((z\overline{x})x)y)
=\Tr((z(\overline{x}x))y)=0$.
\end{enumerate}
\end{proof}

Now by explicit computation we see that $N_x$ maps $X=(a,b,c\mid A,B,C)$ to
$$(a-\Tr(C\overline{x}'), b+\Tr(\overline{C}x), c\mid
A+\overline{x}\overline{B}, B-\overline{A}\overline{x}',
C+ax-bx'-x'\overline{C}x).$$
Then we can compute the new value of $H_1$ term by term as follows:
\begin{eqnarray}
aa'&\mapsto&aa'-a\Tr(C'\overline{x}) -a'\Tr(C\overline{x}')
+\Tr(C'\overline{x})\Tr(C\overline{x}')\cr
bb'&\mapsto& bb'+b\Tr(\overline{C}'x')+b'\Tr(\overline{C}x)
+\Tr(\overline{C}x)\Tr(\overline{C}'x')\cr
cc'&\mapsto& cc'\cr
\Tr(A\overline{A}')&\mapsto &
\Tr(A\overline{A}') + \Tr(\overline{A}'\overline{x}\overline{B}) + \Tr(AB'x')
+\Tr((\overline{x}\overline{B})(B'x'))\cr
\Tr(B\overline{B}')&\mapsto& \Tr(B\overline{B}')
-\Tr(\overline{A}\overline{x}'\overline{B}') - \Tr(BxA')
+\Tr((\overline{A}\overline{x}')(xA'))\cr
\Tr(C\overline{C}')&\mapsto& \Tr(C\overline{C}') + 
\Tr(a'C\overline{x}'+ax\overline{C}' -b'C\overline{x}-bx'\overline{C}'\cr&&\qquad-C(\overline{x}'C'\overline{x})-C'(\overline{x}C\overline{x}'))
\end{eqnarray}
and using the lemma
we see that all cross terms cancel out, as required.

We also see that $F_4(q)$ is a subgroup of ${}^2SE_6(q)$, because on the 
$\mathbb F_q$-subspace $\mathbb J_{\mathbb F_q}$ all elements of $F_4(q)$
preserve the standard norm, which is just the restriction of $H_1$.
We may now take the same generators for  $F_4(q)$ as before, consisting of
certain matrices $M$ which are fixed by the field automorphism.

Then adjoin to $F_4(q)$ the matrix 
$$M=\begin{pmatrix}x&0&0\cr 0&x^q&0\cr 0&0&1\end{pmatrix},$$
where $x\in \FF_{q^2}\setminus \FF_q$ satisifies $x^{1+q}=1$.
More generally, take matrices
$$M=\begin{pmatrix}a&b&0\cr -b^q&a^q&0\cr
0&0&1\end{pmatrix},$$
where $a^{1+q}+b^{1+q}=1$.

The extra root elements are given by matrices like
$$
\begin{pmatrix}1&\lambda e_0&0\cr -\lambda^q e_{-0}&1&0\cr 0&0&1\end{pmatrix},\mbox{ and }
\begin{pmatrix}1&\lambda e_i&0\cr \lambda^q e_i&1&0\cr 0&0&1\end{pmatrix}, \mbox{ for }i=\pm1,\pm\omega,\pm\ombar.$$

%\section{Generators for ${}^2E_6(q)$}

%\subsection{Properties of the group.}

With a certain amount of calculation it is now possible to show that this group
has exactly three orbits on the white points for $E_6(q^2)$. The lengths of these
orbits are as follows:
\begin{enumerate}
\item $(q^9+1)(q^{12}-1)(q^5+1)/(q^2-1)$, 
\item $(q^4+1)(q^9+1)q^5(q^{12}-1)(q^3-1)/(q^2-1)$ and
\item $q^{16}(q^8+q^4+1)(q^9+1)/(q+1)$. 
\end{enumerate}
Of these, the first two are isotropic with respect to $H_1$, while the last is
non-isotropic.
Now we know the stabilizer in $SE_6(q^2)$ has shape $q^{32}.\Spin^+_{10}(q^2).C_{q^2-1}$
and it is now not too difficult to see what the stabilizers in ${}^2SE_6(q)$ must be.
A point in the last orbit has a stabilizer of shape $\Spin_{10}^-(q).C_{q+1}$, from which
we deduce the order of ${}^2SE_6(q)$, that is,
$$|^2SE_6(q)|= q^{36}(q^{12}-1)(q^9+1)(q^8-1)(q^6-1)(q^5+1)(q^2-1).$$

The three orbits are distinguished as follows. Any white vector $v$ determines a $17$-space,
which is the radical of the quadratic form determined by $v$, and hence
determines the radical of $H_1$ on this $17$-space. If $v$ belongs to this last space,
then $v$ is of type (1), which Aschbacher calls \emph{emerald}; and in fact
the radical of $H_1$ on the $17$-space is just $\langle v\rangle$. Otherwise,
if $H_1(v)=0$, then $v$ is of type (2). Finally, $v$ is of type (3) if $H_1(v)\ne 0$.

%\affiliationone{
  % School of Mathematical Sciences,\\Queen Mary, University of London,\\
%Mile End Road,\\ London E1 4NS,\\ U.K.}
  %\email{r.a.wilson@qmul.ac.uk}

\end{document}